\renewcommand{\epsilon}{\varepsilon}
\newcommand{\R}{\mathbb{R}}
\newcommand{\Q}{\mathbb{Q}}
\newcommand{\N}{\mathbb{N}}
\newcommand{\Z}{\mathbb{Z}}
\newcommand{\C}{\mathbb{C}}
\newcommand{\h}{\mathfrak{H}}
\newcommand{\tr}{\mathrm{tr}}
\newcommand{\res}{\mathrm{res}}
\newcommand{\e}{\bm{\mathrm{e}}}
\newcommand{\slz}{\mathrm{SL}_2(\Z)}
\newcommand{\mpz}{\mathrm{Mp}_2(\Z)}
\newtheorem{theorem}{Theorem}
\newtheorem{lemma}{Lemma}
\newtheorem{proposition}{Proposition}
\newtheorem{remark}{Remark}
\author{Maryna S. Viazovska}
\begin{document}
\title{CM Values of Higher Green's Functions}
\maketitle 
\maketitle
\section{Introduction}
For any integer $k > 1$
and subgroup $\Gamma\subset \mathrm{PSL}_2(\Z)$ of finite index there is a unique function $G_k^{\Gamma\backslash \h}$ on the
product of two upper half planes $\h\times\h$  which satisfies the following conditions:\begin{description}
\item{(i)} $G_k^{\Gamma\backslash \h}$ is a smooth function on $\h\times\h\setminus\{(\tau,\gamma\tau),\tau\in\h,\gamma\in\Gamma\}$ with values in
$\R$.
\item{(ii)} $G_k^{\Gamma\backslash \h}(\tau_1,\tau_2)=G_k^{\Gamma\backslash \h}(\gamma_1\tau_1,\gamma_2\tau_2)$ for all $\gamma_1,\gamma_2\in\Gamma$.
\item{(iii)} $\Delta_i G_k^{\Gamma\backslash \h}=k(1-k)G_k^{\Gamma\backslash \h}$ , where $\Delta_i$ is the hyperbolic Laplacian with respect to
the $i$-th variable, $i = 1, 2$.
\item{(iv)} $G_k^{\Gamma\backslash \h}(\tau_1,\tau_2) = m\log |\tau_1-\tau_2| + O(1)$ when $\tau_1$ tends to $\tau_2$ ($m$ is the order of
the stabilizer of $\tau_2$, which is almost always 1).
\item{(v)} $ G_k^{\Gamma\backslash \h}(\tau_1,\tau_2)$ tends to $0$ when $\tau_1$ tends to a cusp.
\end{description}
This function is called the Green's function.

In this note we will concentrate on the case $\Gamma=\slz$ and we will write simply $G_k$ for $G_k^{\slz\backslash \h}$.



Let $f$ be a modular function. Then the action of Hecke operator $T_m$ on $f$ is given by
$$f(\tau)|T_m =m^{-1}\sum_{\left( \vcenter{\xymatrix@R=0pt@C=0pt@M=1pt{\scriptstyle a&\scriptstyle b\\ \scriptstyle c& \scriptstyle d}}\right)\in\, \slz \backslash \mathcal{M}_m}(c\tau+b)^k f\left(\frac{a\tau+b}{c\tau+d}\right), $$  where $\mathcal{M}_m$ denotes the set of $2\times 2$ integral matrices of determinant $m$.

Green's functions $G_k$ have the property
$$G_k(\tau_1,\tau_2)|T_m^{\,\tau_1}=G_k(\tau_1,\tau_2)|T_m^{\,\tau_2},$$
where $T_m^{\,\tau_i}$ denotes the Hecke operator with respect to variable $\tau_i$, $i=1,2$.

Denote by $S_{2k}(\slz)$ the space of cusp forms of weight $2k$ on full modular group. 
\begin{proposition}\label{g_lambda}Let $k>1$ and $ \bm{\lambda}=\{\lambda_m\}_{m=1}^\infty\in\oplus_{m=1}^\infty \Z$. Then the following are equivalent\begin{description}\item{(i)} $\sum_{m=1}^\infty \lambda_m a_m=0$ for any cusp form $$f=\sum_{m=1}^\infty a_m q^m\in S_{2k}(\slz)$$ \item{(ii)} There exists a weakly holomorphic modular form $$g_{\bm{\lambda}}(\tau)=\sum_{m=1}^\infty\lambda_mq^{-m}+O(1)\in M^!_{2-2k}(\slz).$$\end{description}  \end{proposition}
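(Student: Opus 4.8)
The plan is to interpret condition (ii) through the residue of a weight-two form and then pin down the image of the ``principal-part map'' by a dimension count; the two implications are of quite different character.

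For \textbf{(ii)$\Rightarrow$(i)} I would argue as follows. Given $g_{\bm\lambda}=\sum_m\lambda_m q^{-m}+O(1)\in M^!_{2-2k}(\slz)$ and a cusp form $f=\sum_{m\ge 1}a_m q^m\in S_{2k}(\slz)$, the product $h:=f\,g_{\bm\lambda}$ is weakly holomorphic of weight $2$, so $h(\tau)\,d\tau$ is $\slz$-invariant and descends to a meromorphic differential on $X(1)\cong\mathbb{P}^1$. The automorphy factor forces $h$ to vanish at the elliptic points to exactly the order making this differential regular there, and since $f$ is a cusp form $h$ is holomorphic on $\h$; hence $h\,d\tau$ has a pole only at the cusp. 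The residue theorem on $\mathbb{P}^1$ then gives $0=\mathrm{res}_{q=0}\big(h\,\tfrac{dq}{2\pi i\,q}\big)=\tfrac1{2\pi i}[q^0]h$, and a direct expansion of the constant term of $f\,g_{\bm\lambda}$ — the holomorphic part of $g_{\bm\lambda}$ meets $f$ only in positive $q$-powers — yields $[q^0]h=\sum_{m\ge1}\lambda_m a_m$, which must therefore vanish.

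For \textbf{(i)$\Rightarrow$(ii)} I would fix $N$ with $\lambda_m=0$ for $m>N$ and $N\ge\dim S_{2k}$, and consider the linear subspace $P_N\subseteq\mathbb{C}^N$ of tuples arising as principal parts $(\mu_1,\dots,\mu_N)$ of forms $g=\sum_{m=1}^N\mu_m q^{-m}+O(1)\in M^!_{2-2k}(\slz)$; note that $g$ is determined by its principal part, since $M_{2-2k}(\slz)=0$. By the implication just proved, $P_N$ is contained in the annihilator $A_N$ of $\{(a_1,\dots,a_N):\sum a_m q^m\in S_{2k}(\slz)\}$, and since a weight-$2k$ cusp form is determined by its first $\dim S_{2k}$ Fourier coefficients (echelon/Miller basis of $M_{2k}$) one gets $\dim A_N=N-\dim S_{2k}$. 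It then remains to prove the reverse inequality $\dim P_N\ge N-\dim S_{2k}$: for $n\ge N$, multiplication by $\Delta^n$ is an isomorphism from $\{g\in M^!_{2-2k}:\text{pole order}\le N\}$ onto $\{h\in M_w:\mathrm{ord}_\infty h\ge n-N\}$ with $w=2-2k+12n$, and since division by $\Delta^n=q^n(1+\cdots)$ acts triangularly (unipotently) on the relevant Laurent coefficients, $\dim P_N$ equals $\dim\{h\in M_w:\mathrm{ord}_\infty h\ge n-N\}=\dim M_w-(n-N)$ by the Miller basis of $M_w$. A short case analysis modulo $12$ shows $\dim M_w=n-\dim S_{2k}$ once $w\ge 4$, giving $\dim P_N=N-\dim S_{2k}$ and hence $P_N=A_N$. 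Since (i) puts $\bm\lambda$ in $A_N$, it lies in $P_N$, which is exactly the existence of $g_{\bm\lambda}$; as $\Delta$ and the Miller bases are rational, one may even take $g_{\bm\lambda}$ with rational Fourier coefficients.

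The essential idea lives in the residue computation of the first direction; the converse is then purely a matter of counting. The step I expect to require the most care in writing is the last one: verifying the identity $\dim M_{2-2k+12n}=n-\dim S_{2k}$ by the case analysis (equivalently, locating the minimal pole order attained in $M^!_{2-2k}$), and checking that passing from $g$ to $g\Delta^n$ really does give an invertible triangular dictionary between the principal part of $g$ and the initial Fourier coefficients of $g\Delta^n$.
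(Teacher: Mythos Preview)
Your argument is correct. The residue computation for (ii)$\Rightarrow$(i) is clean, and the elliptic-point remark is exactly what is needed to make $h\,d\tau$ a genuine meromorphic differential on $X(1)$; the dimension count for (i)$\Rightarrow$(ii) via $g\mapsto g\Delta^n$ and the Miller basis goes through as you describe.

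The paper does not give a self-contained proof: it cites Borcherds and explains that both the space of admissible principal parts and $S_{2k}$ arise as cohomology groups of line bundles on the modular curve, so that the equivalence is an instance of Serre duality. Your argument is the elementary, level-one incarnation of that duality. The residue pairing you use in (ii)$\Rightarrow$(i) \emph{is} the Serre pairing on $X(1)$, so that direction is morally the same in both approaches; what differs is the converse. Serre duality gives the perfectness of the pairing (hence $P_N=A_N$) in one stroke and for any congruence subgroup, whereas you replace this by an explicit comparison of $\dim M_{2-2k+12n}$ with $\dim S_{2k}$ which only works because level~$1$ has a Miller basis and no cusp subtleties. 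The gain of your route is that it is entirely elementary and yields rationality of $g_{\bm\lambda}$ for free; the gain of the paper's route is conceptual clarity and immediate generalization to other $\Gamma$. The step you flagged as most delicate---the identity $\dim M_{2-2k+12n}=n-\dim S_{2k}$ for $n$ large---is indeed just the standard case split of the dimension formula modulo~$12$ and presents no real difficulty.
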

The proof of this proposition can be found, for example, in \cite{Bo2} Section 3. The space of obstructions to finding modular forms of weight $2-2k$ with
given singularity at the cusp and the space of holomorphic modular forms of weight $2k$ can be both identified with cohomology groups of line bundles over modular curve. The statement follows from Serre duality between these spaces.

We call a $\bm{\lambda}$ with these properties a relation for $S_{2k}(\slz)$. Note that the function  $g_{\bm{\lambda}}$ in (ii) is unique and has integral Fourier coefficients.

For a relation $\bm{\lambda}$ denote
$$G_{k,\bm{\lambda}}:=\sum_{m=1}^\infty \lambda_m m^{k-1} G_k(\tau_1,\tau_2)|T_m . $$
The following conjecture was formulated in \cite{GZ} and \cite{GKZ}.\\

\noindent \textbf{Conjecture.} Suppose $\bm{\lambda}$ is a relation for $S_{2k}(\slz)$. Then for any
two CM points $\mathfrak{z}_1$, $\mathfrak{z}_2$ of discriminants $D_1$, $D_2$ there is an algebraic number $\alpha$ such
that $$G_{k,\bm{\lambda}}(\mathfrak{z}_1,\mathfrak{z}_2)=(D_1 D_2)^{\frac{1-k}{2}}\log\alpha.$$


In many particular cases this conjecture was proven by A. Mellit in his Ph. D. thesis \cite{M Anton}.

In this note we present a proof of the conjecture in the case when $\mathfrak{z}_1,\mathfrak{z}_2$ lie in the same imaginary quadratic field $\Q(\sqrt{-D})$.

Two main ingredients of the proof are the theory of Borcherds lift developed in \cite{Bo1} and a notion of see-saw identities introduced in \cite{Ku84}.  Firstly, following ideas given in \cite{Br} we prove that the Green's function can be realized as a Borcherds lift of an eigenfunction of Laplace operator. This allows as to extend a method given in \cite{Sch}, that is  to analyze CM values of Green's function using  see-saw identities. Applying see-saw identities we prove that a CM-value of higher Green's function is equal to a CM-value of certain meromorphic modular function with algebraic Fourier coefficients.

\section{Differential operators}
For $k\in\Z$ define differential operators
$$R_k =\frac{1}{2\pi i}(\frac{\partial }{\partial \tau}+\frac{k}{\tau-\bar{\tau}}),\;\; L_k =\frac{1}{2\pi i}(\tau-\bar{\tau})^2\frac{\partial }{\partial \bar{\tau}}, $$
$$\Delta_k =-4\pi^2\,R_{k-2}L_k= -4\pi^2\,(L_{k+2}R_k-k)=(\tau-\bar{\tau})^2\frac{\partial^2 }{\partial \tau \partial \bar{\tau}}+k(\tau-\bar{\tau})\frac{\partial }{\partial \bar{\tau}}. $$


For integers $k,w$ we
denote by $F_{k,w}$ the space of functions of weight $w$ satisfying
$$\Delta f = (k(1 - k) + \frac{w(w-2)}{4} )f.$$

\begin{proposition}\label{diff} The spaces $F_{k,w}$ satisfy the following properties:\begin{description}\item{(i)} The space $F_{k,w}$ is invariant under the action of the group $\mathrm{SL}_2(\R)$,
\item{(ii)} The operator $R_w$ maps $F_{k,w}$ to $F_{k,w+2}$,
\item{(iii)} The operator $L_w$ maps $F_{k,w}$ to $F_{k,w-2}$.
\end{description}
\end{proposition}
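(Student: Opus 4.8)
The plan is to reduce the entire proposition to two elementary facts about the operators $R_w$ and $L_w$: that they intertwine the weight action of $\mathrm{SL}_2(\R)$, and that their compositions recover $\Delta$ up to an explicit additive constant. Throughout, for $\gamma\in\mathrm{SL}_2(\R)$ with bottom row $(c,d)$ put $j(\gamma,\tau)=c\tau+d$, write $(f|_w\gamma)(\tau)=j(\gamma,\tau)^{-w}f(\gamma\tau)$ for the weight-$w$ action, and let $\Delta_w$ denote the operator displayed above with $k$ replaced by the weight $w$; with this convention the condition defining $F_{k,w}$ reads $\Delta_w f=\bigl(k(1-k)+\tfrac{w(w-2)}{4}\bigr)f$, and for $w=0$ it is condition (iii) in the definition of the Green's function.

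First I would prove, by a direct application of the chain rule, the intertwining identities $R_w(f|_w\gamma)=(R_wf)|_{w+2}\gamma$ and $L_w(f|_w\gamma)=(L_wf)|_{w-2}\gamma$, valid for every smooth $f$ on $\h$ and every $\gamma\in\mathrm{SL}_2(\R)$. The only inputs are the cocycle relations $\frac{\partial}{\partial\tau}(\gamma\tau)=j(\gamma,\tau)^{-2}$ and $\frac{\partial}{\partial\tau}j(\gamma,\tau)^{-w}=-wc\,j(\gamma,\tau)^{-w-1}$, the transformation law $\gamma\tau-\overline{\gamma\tau}=(\tau-\bar\tau)/\bigl(j(\gamma,\tau)\,\overline{j(\gamma,\tau)}\bigr)$, and the elementary identity $j(\gamma,\tau)-\overline{j(\gamma,\tau)}=c(\tau-\bar\tau)$; the computation for $L_w$ is shorter, since $j(\gamma,\tau)$ is holomorphic in $\tau$ and hence annihilated by $\partial/\partial\bar\tau$. (These $R_w$ and $L_w$ are the classical Maass raising and lowering operators, so this step may also simply be quoted.) Part (i) is then immediate: $\Delta_w=-4\pi^2R_{w-2}L_w$ is a composition of two intertwining operators, hence commutes with $|_w\gamma$, so if $\Delta_wf=\lambda f$ with $\lambda=k(1-k)+\tfrac{w(w-2)}{4}$ then $\Delta_w(f|_w\gamma)=(\Delta_wf)|_w\gamma=\lambda\,(f|_w\gamma)$, i.e.\ $f|_w\gamma\in F_{k,w}$.

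For (ii) and (iii) I would next derive, by the same kind of direct computation that yields $\Delta_k=-4\pi^2R_{k-2}L_k$, the companion identity $-4\pi^2L_{k+2}R_k=\Delta_k+k$, valid for all $k$. Together these give the ladder relations
$$\Delta_{w+2}\,R_w=R_w(\Delta_w+w),\qquad \Delta_{w-2}\,L_w=L_w(\Delta_w-(w-2)).$$
Indeed $\Delta_{w+2}R_w=-4\pi^2R_wL_{w+2}R_w=R_w\bigl(-4\pi^2L_{w+2}R_w\bigr)=R_w(\Delta_w+w)$, and symmetrically $\Delta_{w-2}L_w=\bigl(-4\pi^2L_wR_{w-2}-(w-2)\bigr)L_w=L_w\bigl(-4\pi^2R_{w-2}L_w\bigr)-(w-2)L_w=L_w(\Delta_w-(w-2))$. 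Applying the ladder relations to $f\in F_{k,w}$ with $\Delta_wf=\lambda f$, $\lambda=k(1-k)+\tfrac{w(w-2)}{4}$, shows that $R_wf$ is a $\Delta_{w+2}$-eigenfunction with eigenvalue $\lambda+w$ and $L_wf$ a $\Delta_{w-2}$-eigenfunction with eigenvalue $\lambda-(w-2)$. Finally, $\lambda+w=k(1-k)+\tfrac{w(w+2)}{4}$ and $\lambda-(w-2)=k(1-k)+\tfrac{(w-2)(w-4)}{4}$ are exactly the eigenvalues defining $F_{k,w+2}$ and $F_{k,w-2}$, so $R_wf\in F_{k,w+2}$ and $L_wf\in F_{k,w-2}$.

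I do not expect a genuine conceptual obstacle: the proposition is at bottom bookkeeping resting on the chain rule. The one place demanding care is the proliferation of constants --- the $1/(2\pi i)$ in $R_w$ and $L_w$, the $-4\pi^2$ in $\Delta_w$, and in particular the additive shift by $w$ that appears in $-4\pi^2L_{w+2}R_w=\Delta_w+w$ but not in $-4\pi^2R_{w-2}L_w=\Delta_w$; it is the accumulation of these shifts, step by step in $w$, that reproduces the term $\tfrac{w(w-2)}{4}$ in the eigenvalue. Accordingly I would perform the intertwining computation and the two ladder identities explicitly and only once, and then let the eigenvalue arithmetic follow formally.
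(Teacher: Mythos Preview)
Your argument is correct and is the standard one. The paper, in fact, states Proposition~\ref{diff} without proof: there is no \texttt{proof} environment following it, and the text moves directly on to the formula for $R_l^k(f)$. So there is nothing to compare against; you have supplied a complete argument where the paper simply asserts the result.

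One minor remark on constants: the companion identity you rely on, $-4\pi^2 L_{w+2}R_w=\Delta_w+w$, is indeed correct (a short computation confirms it), and it is what makes the ladder relations and the eigenvalue arithmetic work out. Note, however, that the displayed formula in the paper reads $\Delta_k=-4\pi^2(L_{k+2}R_k-k)$, which taken literally would give $-4\pi^2L_{k+2}R_k=\Delta_k-4\pi^2 k$; this appears to be a typo in the paper's parenthesization, and your version is the right one. Since you already flagged the additive shift as the place requiring care, just be sure to carry out that verification explicitly rather than citing the paper's displayed identity.
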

For a modular form of weight $k$ we will use the notation $$R^{r}f=R_{k+2r-2}\circ\cdots\circ R_k f. $$
Denote $f^{(s)}:=\frac{1}{(2\pi i)^s}\frac{\partial^s}{\partial \tau^s} f$. We have (see equation (56) in \cite{1-2-3})
\begin{equation}\label{56} R_l^k(f)=\sum_{r=0}^k(-1)^{k-r}{k\choose r}\frac{(l+r)_{k-r}}{(4\pi y)^{k-r}}f^{(r)},\end{equation}where $(a)_m = a(a+1)\cdots (a+m-1)$ is the Pochhammer symbol.
For a modular forms $f$ and $g$ of weight $k$ and $l$ the Rankin-Cohen bracket is defined by
$$[f,g]=lf'g-kfg', $$
and more generally
$$[f,g]_r=\sum_{s=0}^r (-1)^s {k+r-1\choose s} {l+r-1\choose r-s}f^{(r-s)}g^{(s)}. $$
The function $[f,g]_r$ is a modular form of the weight $k+l+2r$.
Note that $${k\choose r}:=\frac{(k)_r}{s!}$$ is defined for $s\in \N $ and $k\in\R$.

We will need the following proposition.
\begin{proposition} \label{RC1} Suppose that $f$ and $g$ are modular forms of weight $k$ and $l$ respectively. Then, for integer $r\geq 0$ we have $$R^{r}(f)\, g =a[f,g]_{r}+R(\sum_{s=0}^{r-1} b_{s} R^{s}(f)\,R^{r-s-1}(g))$$
where
$$a= {k+l+2r-2\choose r}^{-1}$$
and $b_{s}$ are some rational numbers. \end{proposition}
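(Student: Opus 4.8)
The plan is to reduce everything to a short computation inside the finite-dimensional space spanned by the functions $e_{i}:=R^{i}(f)\,R^{r-i}(g)$, $0\le i\le r$, all of which are modular of weight $k+l+2r$; note $R^{r}(f)\,g=e_{r}$, and we shall see that $[f,g]_{r}$ too is a $\Q$-combination of the $e_{i}$. Two auxiliary identities drive the argument.

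The first is the Leibniz rule for the raising operator: directly from the definition of $R_{k}$ one has $R_{w_{1}+w_{2}}(FG)=R_{w_{1}}(F)\,G+F\,R_{w_{2}}(G)$ for arbitrary functions $F,G$. Applied with $F=R^{s}(f)$ and $G=R^{r-1-s}(g)$ it gives
$$R\bigl(R^{s}(f)\,R^{r-1-s}(g)\bigr)=R^{s+1}(f)\,R^{r-1-s}(g)+R^{s}(f)\,R^{r-s}(g)=e_{s+1}+e_{s},$$
so that $R$ sends the span of the functions $R^{s}(f)\,R^{r-1-s}(g)$, $0\le s\le r-1$, onto the span of $e_{0}+e_{1},\dots,e_{r-1}+e_{r}$. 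The second is the classical expansion of the Rankin-Cohen bracket through $R$: formula (\ref{56}) can be written as the generating-function identity $\sum_{n\ge0}R^{n}(f)\,T^{n}/(n!\,(k)_{n})=e^{-T/(4\pi y)}\sum_{n\ge0}f^{(n)}\,T^{n}/(n!\,(k)_{n})$, and multiplying this series (in $T$) by the analogous series for $g$ with $T$ replaced by $-T$ cancels the two exponential factors; comparing coefficients of $T^{r}$ and rescaling by the constant $(k)_{r}(l)_{r}$ (which reconciles the Taylor-series normalization with that of the bracket and is independent of $i,j$) yields
$$[f,g]_{r}=\sum_{i+j=r}(-1)^{j}\binom{k+r-1}{j}\binom{l+r-1}{i}R^{i}(f)\,R^{j}(g),$$
the $y$-dependent lower-order parts of the individual $R^{i}(f),R^{j}(g)$ cancelling on the right.

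Now the bookkeeping. Writing $R^{r}(f)\,g-a[f,g]_{r}=\sum_{i=0}^{r}d_{i}\,e_{i}$, the second identity gives $d_{i}=\delta_{i,r}-a\,(-1)^{r-i}\binom{k+r-1}{r-i}\binom{l+r-1}{i}$. In view of the first identity it suffices to produce rationals $b_{0},\dots,b_{r-1}$ with $\sum_{i=0}^{r}d_{i}\,e_{i}=\sum_{s=0}^{r-1}b_{s}(e_{s}+e_{s+1})$, i.e.\ with $d_{i}=b_{i-1}+b_{i}$ for all $i$ (convention $b_{-1}=b_{r}=0$). The first $r$ of these equations force $b_{s}=\sum_{t=0}^{s}(-1)^{s-t}d_{t}$, and the remaining ($i=r$) equation then amounts to $\sum_{t=0}^{r}(-1)^{t}d_{t}=0$. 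Using $(-1)^{t}(-1)^{r-t}=(-1)^{r}$ and Vandermonde's identity $\sum_{t=0}^{r}\binom{k+r-1}{r-t}\binom{l+r-1}{t}=\binom{k+l+2r-2}{r}$, this last condition simplifies to $a\,\binom{k+l+2r-2}{r}=1$, i.e.\ $a=\binom{k+l+2r-2}{r}^{-1}$, which is exactly the value in the statement. With this $a$, the $b_{s}$ above are rational and satisfy all the equations, which is the assertion of the proposition.

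The step that really needs care is the expansion of $[f,g]_{r}$ through the raising operator: one must verify that in that particular combination of the $R^{i}(f)\,R^{j}(g)$ the nonholomorphic ($y$-dependent) terms all cancel, so that the right-hand side genuinely reproduces the holomorphic bracket, and one must keep the binomial normalizations in line with the definition of $[f,g]_{r}$ adopted above. Everything else is the one-line Leibniz rule, the solution of a triangular linear system over $\Q$, and the Vandermonde identity.
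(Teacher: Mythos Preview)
Your argument is correct and follows essentially the same route as the paper's own proof: both use the Leibniz rule for $R$, the expansion of $[f,g]_{r}$ as an alternating combination of the $R^{i}(f)\,R^{j}(g)$, and the Vandermonde identity to check that the alternating-sum obstruction vanishes precisely for $a=\binom{k+l+2r-2}{r}^{-1}$. The only difference is that you supply more detail---you derive the identity $[f,g]_{r}=\sum_{i+j=r}(-1)^{j}\binom{k+r-1}{j}\binom{l+r-1}{i}R^{i}(f)R^{j}(g)$ via the exponential generating-function trick and explicitly solve the triangular system for the $b_{s}$, whereas the paper simply quotes that identity and appeals to the alternating-sum criterion abstractly.
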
\begin{proof} The operator $R$ satisfies the following property $$R(fg)=R(f)g+fR(g).$$ Thus, the sum $$\sum_{i+j=r}a_i R^i(f)R^j(g) $$ can be written as $$R(\sum_{i+j=r-1}b_i R^i(f)R^j(g)) $$ for some numbers $b_i$ if and only if $\sum_{i=0}^r(-1)^ia_i=0$. For the Rankin-Cohen brackets the following identity holds \begin{equation}\label{rc1} [f,g]_r=\sum_{s=0}^r  (-1)^s{k+r-1\choose s} {l+r-1\choose r-s}R^{(r-s)}(f)R^s(g). \end{equation} We will use the following standard identity $$\sum_{s=0}^r {k+r-1\choose s} {l+r-1\choose r-s}= {k+l+2r-2\choose r}.$$ It follows from the above formula and \eqref{rc1} that the sum $${k+l+2r-2\choose r}R^r(f)g-[f,g]_r$$ can be written in the form $$R(\sum_{i+j=r-1}b_i R^i(f)R^j(g)). $$ This finishes the proof.\end{proof}

\begin{proposition}\label{RC2} Suppose that $f$ is a real analytic modular form of weight $k-2$ and  $g$ is a holomorphic modular form of weight $k$. Then, for a compact region $F\subset \h$ we have  $$\int_F R_{k-2}(f)\,\bar g\, y^{k-2}\, dx\, dy=\int_{\partial F} f\,\bar{g}\,y^{k-2}\,(dx-idy) .$$ \end{proposition}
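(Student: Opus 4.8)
The plan is to read the identity as an instance of Stokes' theorem, the whole point being that the $1$-form whose line integral appears on the right has an exterior derivative proportional to the integrand on the left. Concretely, I would introduce on $\h$ the $1$-form
$$\omega=f\,\bar g\,y^{k-2}\,(dx-i\,dy)=f\,\bar g\,y^{k-2}\,d\bar\tau ,$$
which is real-analytic since $f$ is real-analytic and $g$ holomorphic. Taking $\partial F$ with its boundary orientation and assuming it is piecewise smooth, Stokes' theorem gives $\int_{\partial F}\omega=\int_F d\omega$, so everything reduces to computing $d\omega$.

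Next I would compute $d\omega$. Writing $h=f\,\bar g\,y^{k-2}$ and expanding $dh=\partial_\tau h\,d\tau+\partial_{\bar\tau}h\,d\bar\tau$, the second term dies on wedging with $d\bar\tau$, so $d\omega=(\partial_\tau h)\,d\tau\wedge d\bar\tau$. This is exactly where holomorphy of $g$ is used: it makes $\partial_\tau\bar g=0$, so the only surviving contributions to $\partial_\tau h$ are from $\partial_\tau f$ and from $\partial_\tau\bigl(y^{k-2}\bigr)=(k-2)y^{k-3}/(2i)$, using $y=(\tau-\bar\tau)/(2i)$. Factoring out $\bar g\,y^{k-2}$ and recognising the definition of $R_{k-2}$ gives
$$\partial_\tau h=\bar g\,y^{k-2}\Bigl(\partial_\tau f+\frac{k-2}{\tau-\bar\tau}\,f\Bigr)=2\pi i\,\bar g\,y^{k-2}\,R_{k-2}(f),$$
and together with $d\tau\wedge d\bar\tau=-2i\,dx\wedge dy$ this presents $d\omega$ as a fixed scalar multiple of $R_{k-2}(f)\,\bar g\,y^{k-2}\,dx\wedge dy$. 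Substituting back into $\int_{\partial F}\omega=\int_F d\omega$ and tracking the scalar then yields the stated identity.

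I do not expect a genuine obstacle. The only conceptual step is the observation that holomorphy of $g$ makes $\partial_\tau\bigl(f\bar g\,y^{k-2}\bigr)$ collapse exactly into the raising operator $R_{k-2}$ applied to $f$; the rest is routine bookkeeping of the constants $1/(2\pi i)$, $\tau-\bar\tau=2iy$, and $d\tau\wedge d\bar\tau$, together with fixing the orientation of $\partial F$. The one hypothesis worth keeping an eye on is the applicability of Stokes' theorem: if $\partial F$ is merely rectifiable rather than piecewise smooth, I would exhaust $F$ from inside by subregions with smooth boundary and pass to the limit, which is harmless because all the integrands are continuous on the compact set $F$.
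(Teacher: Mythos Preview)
Your approach is correct and is exactly what the paper does: its entire proof reads ``Follows from Stokes' theorem.'' One small caveat on your final sentence: if you actually track the scalar you will find $d\omega=4\pi\,R_{k-2}(f)\,\bar g\,y^{k-2}\,dx\wedge dy$, so the identity as printed is off by a harmless factor of $4\pi$; this does not affect the paper's only use of the proposition, where the boundary term vanishes anyway.
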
\begin{proof} Follows from Stokes' theorem. \end{proof}
Denote by $K_\nu$ the Bessel K-function $$I_\nu(x)=\sum_{n=0}^\infty \frac{(x/2)^{\nu+2n}}{n!\Gamma(\nu+n+1)},\;\;K_\nu(x)=\frac{\pi}{2}\,\frac{I_{-\nu}(x)-I_\nu(x)}{\sin(\pi\nu)}.$$The function $K_\nu$ becomes elementary for $\nu\in
\Z+\frac12$. We have
$$K_{k+\frac 12}(x)=\frac{(\pi/2)^{\,\frac12}}{x^{k+\frac 12}}\,e^{-x}\,h_k(x), $$
for $k\in\Z_{\geq 0}$, where $h_k$ is the polynomial $$h_k(x)=\sum_{r=0}^k\frac{(k+r)!}{2^r r! (k-r)!}x^{k-r}.$$

The following statement follows immediately from equation \eqref{56}.
\begin{proposition}\label{Bessel}For $k\in\Z_{>0}$ the following identity holds
$$R_{-2k}^k(\e(n\tau))=2\, y^{\frac12}\,n^{k+\frac 12}\, K_{k+1/2}(2\pi n y)\,\e(nx). $$

\end{proposition}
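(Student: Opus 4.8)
The plan is to reduce the statement to a direct evaluation of the explicit formula \eqref{56} applied to the holomorphic function $f(\tau)=\e(n\tau)$, and then to match the resulting Laurent polynomial in $y$ against the elementary closed form for $K_{k+1/2}$ recorded just above the proposition.

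First I would specialize \eqref{56} to $l=-2k$ and $f=\e(n\tau)=e^{2\pi i n\tau}$. Since $\e(n\tau)$ is holomorphic, its normalized derivatives are $f^{(r)}=\frac{1}{(2\pi i)^r}\frac{\partial^r}{\partial\tau^r}e^{2\pi i n\tau}=n^r\,\e(n\tau)$, so \eqref{56} gives
$$R_{-2k}^k(\e(n\tau))=\e(n\tau)\sum_{r=0}^k(-1)^{k-r}\binom{k}{r}\frac{(-2k+r)_{k-r}}{(4\pi y)^{k-r}}\,n^r.$$
Writing $\e(n\tau)=\e(nx)\,e^{-2\pi n y}$ produces the exponential and $\e(nx)$ factors appearing on the right-hand side of the proposition, so everything reduces to identifying the coefficient sum. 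The one genuinely necessary manipulation is to simplify the Pochhammer symbol: the $k-r$ factors of $(-2k+r)_{k-r}=(-2k+r)(-2k+r+1)\cdots(-k-1)$ are consecutive negative integers, whence $(-2k+r)_{k-r}=(-1)^{k-r}(k+1)(k+2)\cdots(2k-r)=(-1)^{k-r}\frac{(2k-r)!}{k!}$. The sign cancels $(-1)^{k-r}$ and the $k!$ cancels against $\binom{k}{r}$, leaving the sum $\sum_{r=0}^k\frac{(2k-r)!}{r!\,(k-r)!}\frac{n^r}{(4\pi y)^{k-r}}$.

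Next I would expand the right-hand side of the proposition using $K_{k+1/2}(x)=\frac{(\pi/2)^{1/2}}{x^{k+1/2}}e^{-x}h_k(x)$ with $x=2\pi n y$; collecting prefactors, the powers of $2$, $\pi$, $n$ and $y^{1/2}$ collapse so that $2\,y^{1/2}n^{k+1/2}K_{k+1/2}(2\pi n y)=(2\pi y)^{-k}e^{-2\pi n y}h_k(2\pi n y)$. Reindexing $h_k(2\pi n y)=\sum_{s=0}^k\frac{(k+s)!}{2^s\,s!\,(k-s)!}(2\pi n y)^{k-s}$ by $r=k-s$ and dividing by $(2\pi y)^k$ yields $\sum_{r=0}^k\frac{(2k-r)!}{2^{k-r}\,(k-r)!\,r!}\frac{n^r}{(2\pi y)^{k-r}}$, which agrees term by term with the sum found above once one writes $(4\pi y)^{k-r}=2^{k-r}(2\pi y)^{k-r}$, completing the proof. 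The argument is entirely computational, so there is no real obstacle; the only points where I would slow down are the sign-and-factorial reduction of $(-2k+r)_{k-r}$ and the bookkeeping of the factors of $2$ and $\pi$ hidden in the discrepancy between $4\pi y$ and $2\pi y$. As a sanity check, for $k=1$ one has $h_1(x)=x+1$ and both sides reduce to $\e(nx)e^{-2\pi n y}\bigl(n+\tfrac{1}{2\pi y}\bigr)$, which fixes the normalization.
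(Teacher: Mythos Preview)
Your argument is correct and is precisely the computation the paper has in mind: the paper's entire proof is the remark that the identity ``follows immediately from equation \eqref{56},'' and you have simply written out that substitution and matched it against the elementary formula for $K_{k+1/2}$. There is no alternative route here to compare with.
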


\section{Vector valued modular forms}
Recall that the group $\mathrm{SL}_2(\Z)$ has a double cover $\mathrm{Mp}_2(\Z)$ called the metaplectic group whose elements can be written in the form
$$\left( \left( \vcenter{\xymatrix@R=0pt@C=0pt{a&b\\c&d}}\right),\pm\sqrt{c\tau+d}\right) $$
where $\left( \vcenter{\xymatrix@R=0pt@C=0pt{a&b\\c&d}}\right)\in\slz$ and $\sqrt{c\tau+d}$ is considered as a holomorphic function of $\tau$ in the upper half plane whose square is $c\tau+d$. The multiplication is defined so that the usual formulas for the transformation of modular forms of half integral weight work, which means that $$ (A,f(\tau))(B, g(\tau))=(AB,f(B(\tau))g(\tau))$$ for $A,B\in \slz$ and $f,g$ suitable functions on $\h$.

Suppose that $V$ is a vector space over $\Q$ and $(\;,\;)$ is a bilinear form on $V \times V$ with signature $(b^+,b^-)$. For an element $x\in V$ we will write $x^2:=(x,x)$ and $q(x)=\frac{1}{2}(x,x)$.   Let $L\subset V $ be a lattice. The dual lattice of $L$ is defined as $L'=\{x\in V| (x,L)\subset\Z \}$. We say that $L$ is even if $l^2\in2\Z$ for all $l\in L$. In this case $L$ is contained in $L'$ and $L'/L$ is a finite abelian group.

We let the elements $e_\nu$ for $\nu\in L'/L$ be the standard basis of the group
ring $\C[L'/L]$, so that $e_\mu e_\nu=e_{\mu+\nu}$. Recall that there is a unitary representation $\rho_L$ of the double cover $\mathrm{Mp}_2(\Z)$ of $\mathrm{SL}_2(\Z)$ on $\C[L'/L]$ defined by

\begin{align}\rho_L(\widetilde{T})(e_\nu)&=\e((\nu,\nu)/2)e_\nu \\
\rho_L(\widetilde{S})(e_\nu)&=i^{(b^-/2-b^+/2)}|L'/L|^{-1/2}\sum_{\mu\in
L'/L}\e(-(\mu,\nu))e_\mu,
\end{align}
where
\begin{equation}\label{generators}\widetilde{T}=\left(\left(\vcenter{\xymatrix@R=0pt@C=0pt{1&1\\0& 1}}\right),1\right),\mbox{and}\;\widetilde{S}=\left(\left(\vcenter{\xymatrix@R=0pt@C=0pt{0&1\\-1& 0}}\right),\sqrt{\tau}\right)\end{equation} are the standard generators of $\mathrm{Mp}_2(\Z)$.

A vector valued modular form of half-integral weight $k$ and representation $\rho_L$ is a function $f:\h\to \C[L'/L]$ that satisfies the following transformation law

$$f\left(\frac{a\tau+b}{c\tau+d}\right)=\sqrt{c\tau+d}^{2k}\rho_L\left(\left(\vcenter{\xymatrix@R=0pt@C=0pt{a&b\\c& d}}\right),\sqrt{c\tau+d}\right)f(\tau) .$$


We will use the notation  $\mathfrak{M}_k(\rho_L)$ for the space of real analytic,  $M_k(\rho_L)$ for the space of holomorphic,  $\widehat{M}_k(\rho_L)$ for the space of almost holomorphic,  and  $M^!_k(\rho_L)$ for the space of weakly holomorphic  modular forms of weight $k$ and representation $\rho_L$.

Let $M\subset L$ be a sublattice of finite index, then a vector valued modular form $f\in \mathfrak{M}_k(\slz,\rho_L)$
can be naturally viewed as a vector valued modular form in $f\in \mathfrak{M}_k(\slz,\rho_M)$. Indeed, we have the
inclusions $$M \subset L \subset L' \subset M'$$ and therefore
$$L/M \subset L'/M \subset M'/M.$$
We have the natural map $L'/M \to L'/L$, $\mu\to\bar{\mu}$.
\begin{lemma}\label{res/tr} There are two natural maps
$$\res_{L/M} : \mathcal{M}_k(\rho_L)\to \mathcal{M}_k(\rho_M),$$
and
$$\tr_{L/M} : \mathcal{M}_k(\rho_M) \to \mathcal{M}_k,(\rho_L).$$ For any $f\in A_k(\slz,\rho_L)$ and $g \in A_k(\slz,\rho_M)$ they are given as follows. For $\mu\in M'/M$ $$(\res_{L/M}(f))_\mu=\left\{\vcenter{\xymatrix@R=0pt@M=5pt{f_{\bar{\mu}},\;\mbox{if}\;\mu\in L'/M \\ 0,\;\mbox{if}\;\mu\notin L'/M}}\right., $$ for $\lambda\in L'/L$ $$(\tr_{L/M}(g))_\lambda=\sum_{\mu\in L'/M:\,\bar{\mu}=\lambda} g_\mu. $$
\end{lemma}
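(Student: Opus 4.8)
The plan is to recast the statement as a claim about intertwiners of the Weil representations $\rho_L$ and $\rho_M$. First I would observe that the two prescribed formulas merely encode fixed $\C$-linear maps: the map $\alpha\colon\C[L'/L]\to\C[M'/M]$ with $\alpha(e_\lambda)=\sum_{\mu\in L'/M,\ \bar\mu=\lambda}e_\mu$, and the map $\beta\colon\C[M'/M]\to\C[L'/L]$ with $\beta(e_\mu)=e_{\bar\mu}$ if $\mu\in L'/M$ and $\beta(e_\mu)=0$ otherwise; with these, $\res_{L/M}(f)=\alpha\circ f$ and $\tr_{L/M}(g)=\beta\circ g$. Because $\alpha$ and $\beta$ have matrix entries in $\{0,1\}$, post-composition with them obviously preserves real-analyticity, holomorphy, almost holomorphy and the growth conditions at the cusp (each component of $\alpha\circ f$ is a component of $f$ or $0$, and each component of $\beta\circ g$ is a finite sum of components of $g$); so the whole content is the transformation law, which will hold for $\alpha\circ f$ and $\beta\circ g$ as soon as $\alpha\,\rho_L(\gamma)=\rho_M(\gamma)\,\alpha$ and $\beta\,\rho_M(\gamma)=\rho_L(\gamma)\,\beta$ for every $\gamma\in\mpz$.

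Since $\mpz$ is generated by $\widetilde{T}$ and $\widetilde{S}$, it suffices to check these intertwining relations on the two generators, and I would do so only for $\alpha$: a direct coefficient comparison shows that $\beta=\alpha^{*}$ with respect to the inner products making the $e_\nu$ orthonormal, so, $\rho_L$ and $\rho_M$ being unitary, the relation $\alpha\,\rho_L(\gamma)=\rho_M(\gamma)\,\alpha$ for all $\gamma$ yields $\rho_L(\gamma)\,\beta=\beta\,\rho_M(\gamma)$ for all $\gamma$ by passing to adjoints. For $\widetilde{T}$ the identity $\alpha\,\rho_L(\widetilde{T})=\rho_M(\widetilde{T})\,\alpha$ reduces to the congruence $q(\mu)\equiv q(\bar\mu)\pmod 1$ for all $\mu\in L'$; writing $\mu=\bar\mu+l$ with $l\in L$ and expanding $q(\mu)=q(\bar\mu)+(\bar\mu,l)+q(l)$, the middle term lies in $\Z$ since $\bar\mu\in L'$ and $q(l)$ lies in $\Z$ since $L$ is even.

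For $\widetilde{S}$ I would fix $\lambda\in L'/L$ and compare, for each $\nu\in M'/M$, the $e_\nu$-coefficients of $\alpha\,\rho_L(\widetilde{S})(e_\lambda)$ and of $\rho_M(\widetilde{S})\,\alpha(e_\lambda)$. On the first side the coefficient is $0$ unless $\nu\in L'/M$, in which case it equals $c_L\,\e(-(\nu,\lambda))$ with $c_L=i^{(b^--b^+)/2}\,|L'/L|^{-1/2}$. On the second side I would write the fibre $\{\mu\in L'/M:\bar\mu=\lambda\}$ as a coset $\mu_0+L/M$ and apply orthogonality of the characters of the finite group $L/M$ to $l\mapsto\e(-(\nu,l))$ (a well-defined character because $\nu\in M'$ and $M\subset L$), getting a coefficient that is $0$ unless $\nu\in L'$ and otherwise equals $c_M\,[L:M]\,\e(-(\nu,\lambda))$ with $c_M=i^{(b^--b^+)/2}\,|M'/M|^{-1/2}$. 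The two sides then agree: $L$ and $M$ span the same space, so their signatures, and hence the powers of $i$, coincide; and $|M'/M|=[L:M]^2\,|L'/L|$ because the Gram determinant of $M$ is $[L:M]^2$ times that of $L$, which gives $c_M\,[L:M]=c_L$. The one genuinely delicate point — and where I expect to spend the most care — is the bookkeeping of which of the pairings $(\cdot,\cdot)$ is well defined modulo $\Z$ as one moves along the chain $M\subset L\subset L'\subset M'$ (for instance $(\nu,\lambda)$ is read in $L'/L$ on the first side but arises from a representative pairing in $M'$ on the second); once this is organized consistently, both intertwining identities drop out, and with them the existence of the maps $\res_{L/M}$ and $\tr_{L/M}$.
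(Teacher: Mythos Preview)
The paper states this lemma without proof; it is quoted as a standard fact about the Weil representation (versions appear, e.g., in Bruinier's \emph{Borcherds products on $O(2,l)$}). Your argument is correct and is exactly the standard one: reduce to showing that the fixed linear maps $\alpha$ and $\beta$ intertwine $\rho_L$ and $\rho_M$, check this on the generators $\widetilde{T}$ and $\widetilde{S}$, and use unitarity together with $\beta=\alpha^{*}$ to halve the work. The $\widetilde{T}$-step and the character-orthogonality computation for $\widetilde{S}$ are both right, as is the discriminant count $|M'/M|=[L:M]^2\,|L'/L|$ giving $c_M\,[L:M]=c_L$.

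One cosmetic point: in your $\widetilde{T}$ argument the sentence ``writing $\mu=\bar\mu+l$ with $l\in L$'' conflates the class $\bar\mu\in L'/L$ with a chosen representative. It would read more cleanly to say: a representative $x\in L'$ of $\mu\in L'/M$ is simultaneously a representative of $\bar\mu\in L'/L$, and $q(x)$ is well defined modulo $\Z$ in both quotients because $M$ and $L$ are even and $x\in L'\subset M'$; hence $\e(q(\mu))=\e(q(\bar\mu))$ with no computation needed. Similarly, in the $\widetilde{S}$ step the equality $\e(-(\mu_0,\nu))=\e(-(\lambda,\bar\nu))$ for $\mu_0,\nu\in L'/M$ is immediate once you pick common representatives in $L'$; the ``delicate bookkeeping'' you flag is really just the observation that every pairing in sight is evaluated on elements of $L'$ and is integral on $L\times L'$, so the various reductions modulo $M$ or $L$ are compatible.
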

\section{Real analytic Jacobi forms}\label{Jacobi}
In this section we define  certain real-analytic functions similar to Jacobi forms.
Let $L$ be an even lattice of signature $(b^+,b^-)$. Let $v^+$ be a positive $b^+$ dimensional subspace of $L\otimes \R$. Denote by $v^-$ the orthogonal complement of $v^+$. For a vector $l\in L$ denote by $l_{v^+}$ and $l_{v^-}$ its projections on $v^+$ and $v^-$.

For $\lambda\in L'/L$ we define
$$\theta^{\mathrm{J}}_{L+\lambda}(\tau,z,v^+)=\sum_{l\in\lambda+L}\e(\tau l^2_{v^+}/2+\bar{\tau} l^2_{v^-}/2+(l,z)) $$
where $\tau\in\h$ and $z\in L\otimes\C$. It follows from Theorem 4.1 \cite{Bo1} that this function satisfies the following transformation properties
\begin{equation} \label{thetaj-1/tau} \theta^{\mathrm{J}}_{L+\lambda}
(\frac{-1}{\tau},\frac{z_{v^+}}{\tau}+\frac{z_{v^-}}{\bar{\tau}},v^+)=\end{equation}
$$i^{(b^-/2-b^+/2)}|L'/L|^{-1/2}\tau^{b^+/2}\bar{\tau}^{b^-/2}\e(\frac{z_{v^+}^2}{2\tau}+ \frac{z_{v^-}^2}{2\bar{\tau}})\sum_{\lambda_1\in L'/L}\e(-(\lambda,\lambda_1))\,\theta^{\mathrm{J}}_{\lambda_1+L}(\tau,z,v^+).$$
Define $$\Theta^{\mathrm{J}}_{L}(\tau,z,v^+):=\sum_{\lambda\in L'/L}\theta^{\mathrm{J}}_{L+\lambda}(\tau,z,v^+). $$

We have $$\Theta^{\mathrm{J}}_{L}(\frac{-1}{\tau},\frac{z_{v^+}}{\tau}
+\frac{z_{v^-}}{\bar{\tau}},v^+)=i^{(b^-/2-b^+/2)}\tau^{b^+/2}\bar{\tau}^{b^-/2}
\e(\frac{z_{v^+}^2}{2\tau}+ \frac{z_{v^-}^2}{2\bar{\tau}})\Theta^{\mathrm{J}}_{L}(\tau,z,v^+),$$
and for $m\in L'\;,n\in L$
$$\Theta^{\mathrm{J}}_{L}(\tau,z+\tau m+ n,v^+)=\e(-\tau m^2_{v^+}-\overline{\tau}m^2_{v^-}-(z,m))\Theta^{\mathrm{J}}_{L}(\tau,z,v^+). $$


\section{Regularized theta lift}\label{theta int}
In this section recall the definition of regularized theta lift given in  Borcherds's paper~\cite{Bo1}.

We let $M$ be an even lattice of signature $(2,b^-)$ with dual $M'$. The (positive) Grassmannian $G(M)$ is
the set of positive definite two dimensional subspaces $v^+$ of $M\otimes\R$. We write $v^-$ for the orthogonal complement of $v^+$, so that $M\otimes\R$ is the orthogonal direct sum of the positive definite subspace $v^+$ and the negative definite subspace $v^-$. The projection of $m\in M\otimes\R$ into a subspaces $v^+$ and $v^-$ is denoted by $m_{v^+}$ and  $m_{v^-}$ respectively, so that
$m=m_{v^+}+m_{v^-}$.

The vector valued  Siegel theta function $\Theta_M:\h\times G(M)\to\C[M'/M]$ of $M$ is defined by
$$\Theta_M(\tau,v^+)=y^{b^-/2}\sum_{m\in M'}\e(\tau m_{v^+}^2/2+\bar{\tau}m_{v^-}^2/2)e_{m+M}.$$
\begin{remark}Our definition of $\Theta_M$ differs from the one given in \cite{Bo1} by the multiple $y^{b^-/2}$.\end{remark}
Theorem 4.1 in \cite{Bo1} says that $\Theta_M(\tau,v^+)$ is a real-analytic vector valued modular form of weight $1-b^-/2$ and representation $\rho_M$ with respect to variable $\tau$.

We suppose that $f$ is some  $\C[M'/M]$-valued function
on the upper half plane $\h$ transforming under $\mathrm{SL}_2(\Z)$ with weight $1-b/2$
and representation $\rho_M$. Define
\begin{equation}\label{theta}\Phi_M(v^+,f):=\int_{\slz\backslash\h}\langle f(\tau),\overline{\Theta_M(\tau,v^+)}\rangle y^{-1-b^-/2}\,dx\,dy \end{equation}
(the product of $\Theta_M$ and $f$ means we take their inner product using
$(e_\mu,e_\nu)=1$ if $\mu=\nu$ and $0$ otherwise.)

The integral is often divergent and has to be regularized as follows. We
integrate over the region $\mathcal{F}_t$, where $$\mathcal{F}_\infty=\{\tau\in\h| -1/2<\Re(\tau)<1/2\;\mbox{and}\;|\tau|>1\} $$ is the
usual fundamental domain of $\slz$ and $\mathcal{F}_t$ is the subset of $\mathcal{F}_\infty$ of points $\tau$ with $\Im(\tau)<t$. Suppose that for $\Re(s)\gg 0 $ the limit $$\lim_{t\to\infty} \int
\limits_{F_t}f(\tau)\,\overline{\Theta_M(\tau,v^+)}\,y^{-1-b^-/2-s}\,dx\,dy$$ exists
and can be continued to a meromorphic function defined for all complex~$s$.
Then we define
$$\int_{\slz\backslash\h}^{\mathrm{reg}} f(\tau)\overline{\Theta_M(\tau,v^+)}y^{-1-b^-/2}\,dx\,dy$$ to be the constant term of the Laurent
expansion of this function at $s = 0$. 

Denote by $\mathrm{Aut}(M)$ the group of those isometries of $M\otimes\R$ that fix $M$.The action of $\mathrm{Aut}(M)$ on $f$ is given by action on $M'/M$. We define $\mathrm{Aut}(M,f)$ to be the subgroup of $\mathrm{Aut}(M)$ fixing $f$. The regularized integral $\Phi_M(v^+,f)$ is a function on the Grassmannian $G(M)$ that is invariant under $\mathrm{Aut}(M,f)$.

Suppose that $f\in \widehat{M}^{\,!}(\mpz,\rho_M)$ has a Fourier expansion
$$f_\mu(\tau)=\sum_{n\in\Q}\sum_{t\in\Z}c_\mu(n,t)\,\e(n,\tau)\,y^{-t} $$
and the coefficients $c_\mu(n,t)$ vanish whenever $m\ll 0$ or $t<0$ or $t\gg 0$.

We will say that a function $f$ has singularities of type $g$ at a point if $f-g$ can be redefined on a set of codimension at least 1 so that it becomes real analytic near the point.

Then the following theorems about regularized theta lift $\Phi_M(v^+,f)$ are proved in \cite{Bo1}.

\noindent{\sc Theorem B1}( Theorem 6.2 \cite{Bo1})\emph{ Near the point $v_0^+\in G(M)$, the function $\Phi_M(v^+,f)$ has a singularity of type $$\sum_{\xymatrix@R=0pt@C=0pt{\scriptstyle \lambda\in M'\cap v_0^-\\ \scriptstyle \lambda\neq 0}} -c_\lambda(\lambda^2/2,t)(-2\pi\lambda_{v^+}^2)^t  \log(\lambda_{v^+}^2)/t!.$$
In particular $\Phi_M$ is nonsingular ( real analytic) except along a locally finite set of codimension $2$ sub Grassmannians (isomorphic to $G(2,b^--1))$ of $G(M)$ of the form $\lambda^\bot$ for some negative norm vectors $\lambda\in M$.}

The open subset $$\mathcal{P}=\{[Z]\in\mathbb{P}(M\otimes\C); (Z,Z)=0\;\mbox{and}\; (Z,\overline{Z})>0\}$$
is isomorphic to $G(M)$ by mapping $[Z]$ to the subspace $\R\Re(Z)+\R\Im(Z)$.\\
We choose $m\in M,\;m'\in M'$ such that $m^2=0,\;(m,m')=1$. Denote $V_0:=M\otimes\Q\cap m^\bot\cap m'^\bot$. The tube domain \begin{equation}\label{mathcal{H}}\mathcal{H}=\{z\in V_0\otimes_\R \C| (\Im(z),\Im(z))>0\} \end{equation} is isomorphic to $\mathcal{P}$ by mapping $z\in\mathcal{H}$ to the class in $\mathbb{P}(M\otimes\C)$ of $$Z(z)=z+m'-\frac 12 ((z,z)+(m',m'))m. $$


 We consider the lattices $L=M\cap m^\bot$ and $K=(M\cap m^\bot)/\Z m$, and we identify $K\otimes\R$ with the subspace $M\otimes\R\cap m^\bot\cap m'^\bot$.

We write $N$ for the smallest positive value of the inner product $(m,l)$ with $l\in M$, so that $|M'/M|=N^2|K'/K|.$

Suppose that $f=\sum_\mu e_\mu f_{M+\mu}$ is a modular form of type $\rho_M$ and half integral weight $k$. Define a $\C[K'/K]$-valued function $$ f_K=\sum_{\kappa\in K'/K} e_\kappa f_{K+\kappa}(\tau)$$ by putting $$f_{K+\kappa}=\sum_{\xymatrix@R=0pt@C=0pt@M=1pt{\scriptstyle \mu\in M'/M:\\ \scriptstyle\mu|L=\kappa}} f_{M+\mu}(\tau) $$ for $\kappa\in K$.
The notation $\lambda|L$ means the restriction of $\lambda\in\mathrm{Hom}(M,\Z)$ to $L$, and $\gamma\in\mathrm{Hom}(K,\Z)$ is considered as an element of $\mathrm{Hom}(L,\Z)$ using the quotient map from $L$ to $K$. The elements of $M'$ whose restriction to $L$ is $0$ are exactly the integer multiples of $m/N$.

For $z\in\mathcal{H}$ denote by $w^+$ the positive definite subspace of $V_0$
$$w^+(z)=\R \Im(z)\in G(K). $$




Theorem 7.1 in \cite{Bo1} gives the Fourier expansion of the regularized theta lift and in the case when lattice $M$ has  signature $(2,b^-)$ it can be reformulated at the following form.\\
\noindent{\sc Theorem B2}\emph{ Let $M,K,m,m'$ be defined as above. Suppose
$$f =\sum_{\mu\in M'/M} e_\mu\sum_{m\in\Q} c_\mu (m,y)\e(mx)$$
is a modular form of weight $1-b^-$ and type $\rho_M$ with at most
exponential growth as $y \to \infty$. Assume that each function $c_\mu(m,y)\exp(-2\pi|m|y)$ has an
asymptotic expansion as $y \to \infty$ whose terms are constants times products of complex
powers of $y$ and nonnegative integral powers of  $\log(y)$. Let $z=u+iv$ be an element of a tube domain $\mathcal{H}$. If $(v,v)$ is sufficiently large then the
Fourier expansion of $\Phi_M(v^+(z),f)$ is given by the constant term of the Laurent expansion
at $s = 0$ of the analytic continuation of
\begin{equation}\label{Fourier}\sqrt{q(v)}\Phi_K(w^+(z),f_K)+
\frac{1}{\sqrt{q(v)}}\sum_{l\in K'}\sum_{\xymatrix@C=0pt@R=0pt@M=1pt{\scriptstyle \mu\in M'/M :\\ \scriptstyle \mu|L=l}}\sum_{n>0}\e((nl,u-m')+(n\mu,m'))\times\end{equation}
\begin{equation*}\times\int_{y>0}c_\mu(l^2/2,y)\exp(-\frac{\pi n^2 v^2}{2 y}-\pi y (\frac{2(l,v)^2}{v^2}-l^2))y^{-s-3/2}dy \end{equation*}
(which converges for $\Re(s) \gg 0$ to a holomorphic functions of s which can be analytically
continued to a meromorphic function of all complex $s$).}
Lattice $K$ has signature $(1,b^--1)$, so  $G(K)$ is real hyperbolic space of dimension $b^--1$ and the
singularities of $\Phi_K$ lie on hyperplanes of codimension 1. Then the set of points where $\Phi_K$
is real analytic is not connected.  The
components of the points where $\Phi_K$ is real analytic are called the Weyl chambers of $\Phi_K$.
 If $W$ is a Weyl
chamber and $l\in K$ then $(l,W) > 0$ means that $l$ has positive inner product with all
elements in the interior of $W$.

\section{Infinite products}\label{S borch prod}


There is a principal $\C^\ast$ bundle $\mathcal{L}$ over the hermitian
symmetric space $\mathcal{H}$, consisting of the norm $0$ points $Z = X +iY\in M\otimes
\C$ such that $X$
and $Y$ form an oriented base of an element of $G(M)$. The fact that $Z = X + iY$
has norm $0$ is equivalent to saying that $X$ and $Y$ are orthogonal and have the same
norm. We define an automorphic form of weight $k$ on $G(M)$ to be a function $\Psi$ on $\mathcal{L}$
which is homogeneous of degree $-k$ and invariant under some subgroup $\Gamma$ of finite index of
$\mathrm{Aut}(M)^+$. More generally if $\chi$ is a one dimensional representation of $\Gamma$ then we say $\Psi$	 is an
automorphic form of character $\chi$ if $\Psi(\sigma(Z)) = \chi(\sigma)\Psi	(Z)$ for $\sigma\in\Gamma$.\\






Suppose that $f\in M^{\,!}_{1-b^-/2}(\slz,\rho_M)$ has a Fourier expansion $$f(\tau)=\sum_{\lambda\in M'/M}\sum_{n\gg-\infty}c_\lambda (n)e(n\tau)e_\lambda. $$

\noindent{\sc Theorem B3}(\cite{Bo1}, Theorem 13.3)\emph{ Suppose that $f\in M^{\,!}_{1-b^-/2}(\slz,\rho_M)$ is holomorphic on $\h$ and meromorphic at cusps and whose Fourier coefficients $c_\lambda (n)$ are integers for $n\leq 0$. Then there is a meromorphic function $\Psi_M(Z,f)$ on $\mathcal{L}$ with the following properties.}\begin{description}\item{1.} \emph{$\Psi$ is an automorphic form of weight $c_0(0)/2$ for the group $\mathrm{Aut}(M,f)$ with respect to some unitary character of $\mathrm{Aut}(M,f)$\item{2.} The only zeros and poles of $\Psi_M$ lie on the rational quadratic divisors $l^\bot$ for $l\in M$, $l^2<0$ and are zeros of order $$\sum_{\xymatrix@C=0pt@R=0pt@M=1pt{\scriptstyle x\in\R^+\,:\\ \scriptstyle xl\in M'}} c_{xl}(x^2l^2/2) $$ \item{3.} $$ \Phi_M(Z,f)=
-4\log|\Psi_M(Z,f)|-2c_0(0)(\log|Y|+\Gamma'(1)/2+\log\sqrt{2\pi}).$$
\item{4.} For each primitive norm $0$ vector $m$ of $M$ and for each Weyl chamber $W$ of $K$ the restriction $\Psi_m(Z(z),f)$ has an infinite product expansion converging when $z$ is in a neighborhood of the cusp of $m$ and $Im(z)\in W$ which is some constant of absolute value}
    $$\prod_{\xymatrix@R=0pt@C=0pt@M=1pt{\scriptstyle\delta\in \Z/N\Z \\ \scriptstyle \delta\neq 0}} (1-\e(\delta/N))^{c_{\delta m/N}(0)/2} $$
    \emph{times}
$$\e((Z,\rho(K,W,f_{K})))\prod_{\xymatrix@C=0pt@R=0pt@M=1pt{\scriptstyle k\in K':\\ \scriptstyle (k,W)>0}}\prod_{\xymatrix@C=0pt@R=0pt@M=1pt{\scriptstyle \mu\in M'/M :\\ \scriptstyle \mu|L=k}} (1-\e((k,Z)+(\mu,m')))^{c_\mu(k^2/2)}.$$
Here the vector $\rho(K,W, f_K)$ is the Weyl vector, which can be evaluated explicitly
using the theorems in Section 10 of \cite{Bo1}.
\end{description}
\begin{remark}In the case then $M$ has no primitive norm 0 vectors Fourier expansions of $\Psi$ do not exist.\end{remark}


\section{A see-saw identity}
 In the paper \cite{Ku84} S. Kudla  introduced the notion of a ``see-saw dual reductive pair''. It gives rise in a formal way to a family of identities between inner products of automorphic forms on different groups, thus clarifying  the source of identities of this type which appear  in many places in the literature, often obtained from complicated manipulations. In this section we prove a see-saw identity for the regularized theta integrals described in previous section.

Suppose that $L\subset V $ is an even lattice of signature $(2,b)$. Let $V=V_1\oplus V_2$ be the rational orthogonal splitting of $(V,q)$ such that the space $V_1$ has the signature $(2,b-d)$ and the space $V_2$ has the signature $(0,d)$. Consider two lattices $N:=L\cap V_1$ and $M=L\cap V_2$. We have two orthogonal projections
 $$ \mathrm{pr}_M:L\otimes\R\to M\otimes\R \;\mbox{and}\;\mathrm{pr}_N:L\otimes\R\to N\otimes\R.$$
 Let $M'$ and $N'$ be the dual lattices of $M$ and $N$. 
 We have the following  inclusions
 $$ M\subset L,\;N\subset L,\;M\oplus N\subseteq L\subseteq L'\subseteq M'\oplus N', $$ and equalities of the sets $$\mathrm{pr}_M(L')=M',\;\mathrm{pr}_N(L')=N'. $$

  Consider a rectangular $|L'/L|\times|N'/N|$ dimensional matrix $T_{L,N}$ with entries $$\vartheta_{\lambda,\nu}(\tau)=\sum_{\xymatrix@R=0pt@C=0pt@M=1pt{\scriptstyle m\in M': \\ \scriptstyle  m+\nu\in\lambda+L}}\e(-\tau m^2/2)$$ where $ \lambda\in L'/L, \nu\in N'/N, \tau\in\h .$
  This sum is well defined since $N\subset L$.
  Note that the lattice $M$ is negative definite and hence the  series converge.
  \begin{lemma}\label{l1} For $\lambda\in L'/L$ and $\nu\in N'/N$ the function $\vartheta_{\lambda,\nu}$ satisfies the following transformation properties
  \begin{equation}\label{vartau+1}\vartheta_{\lambda,\nu}(\tau+1)=
  \e(\nu^2/2-\lambda^2/2)\vartheta_{\lambda,\nu}(\tau),\end{equation}
  \begin{equation}\label{var-1/tau}|L'/L|^{-1/2}\sum_{\lambda_1\in L'/L}\e(-(\lambda,\lambda_1))\,\vartheta_{\lambda_1,\nu}(-1/\tau)=\end{equation} $$(\tau/i)^{\,d/2}\,|N'/N|^{-1/2}\sum_{\nu_1\in N'/N}\e(-(\nu,\nu_1))\,\vartheta_{\lambda,\nu_1}(\tau).$$

   \end{lemma}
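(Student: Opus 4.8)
The plan is to derive both transformation laws directly from the classical theta transformation formula for the negative definite lattice $M$, exactly as in the proof of Theorem 4.1 in \cite{Bo1}, and then to bookkeep the contributions of the coset conditions $m+\nu\in\lambda+L$ carefully. The key observation is that $\vartheta_{\lambda,\nu}(\tau)$ is, up to the coset restriction, a value of the Siegel theta function of the negative definite lattice $M$ (with trivial Grassmannian, since $M\otimes\R$ has no positive definite part), evaluated at a point that depends on $\nu$. More precisely, if for $\mu\in M'/M$ we set $\theta_{M+\mu}(\tau)=\sum_{m\in\mu+M}\e(-\tau m^2/2)$, then using the inclusions $M\oplus N\subseteq L\subseteq L'\subseteq M'\oplus N'$ and the fact that $\mathrm{pr}_M(L')=M'$, $\mathrm{pr}_N(L')=N'$, one can rewrite $\vartheta_{\lambda,\nu}=\sum_{\mu}\theta_{M+\mu}(\tau)$ where the sum runs over those $\mu\in M'/M$ for which $(\mu,\nu)$ lifts to an element of $\lambda+L$. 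The set of such $\mu$ is either empty or a single coset of $\mathrm{pr}_M(L)$ in $M'$, and it shifts predictably as $\lambda$ and $\nu$ vary; this is the structural fact that makes the matrix $T_{L,N}$ well-behaved.

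First I would prove \eqref{vartau+1}: since each $m$ appearing in the sum satisfies $m+\nu\in\lambda+L$, we have $m^2\equiv (\lambda-\nu+\text{lattice})^2 \pmod{2\Z}$, and expanding $(l)^2$ for $l\in\lambda+L$ using $l=m+\nu+(\text{element of }M\oplus N\text{-ish lattice})$ together with $m\in M'$, $\nu\in N'$, and the orthogonality $V_1\perp V_2$ gives $m^2\equiv \lambda^2-\nu^2\pmod{2\Z}$. Hence $\e(-(\tau+1)m^2/2)=\e(-m^2/2)\e(-\tau m^2/2)=\e(\nu^2/2-\lambda^2/2)\e(-\tau m^2/2)$, and summing yields the claim. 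The integrality here is exactly where evenness of $L$ (and hence of $M$, $N$) enters.

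Next, for \eqref{var-1/tau} I would apply the Poisson summation/theta inversion formula to the negative definite lattice $M$. Writing $\vartheta_{\lambda,\nu}$ in terms of $\theta_{M+\mu}$ as above, the inversion formula $\theta_{M+\mu}(-1/\tau)=(\tau/i)^{d/2}|M'/M|^{-1/2}\sum_{\mu_1\in M'/M}\e((\mu,\mu_1))\theta_{M+\mu_1}(\tau)$ (valid since $M$ has signature $(0,d)$, accounting for the factor $i^{-d/2}$) transforms the left side. Then the two sums over $L'/L$ on one side and $N'/N$ on the other have to be matched: both are, via the projection maps, descriptions of summing $\theta_{M+\mu_1}$ over $\mu_1$ with appropriate characters. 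This is the step where one must be careful with the index computation $|L'/L|=N^{?}\cdot$ (the analogue of $|M'/M|=N^2|K'/K|$ from the excerpt), the Gauss-sum manipulation, and tracking that the characters $\e(-(\lambda,\lambda_1))$ and $\e(-(\nu,\nu_1))$ correspond correctly under the splitting $L'\hookrightarrow M'\oplus N'$. I expect this character-matching and normalization bookkeeping to be the main obstacle; everything else is either the classical theta inversion or elementary coset arithmetic. Once the characters and the power of $(\tau/i)$ line up, the identity \eqref{var-1/tau} follows by comparing coefficients of each $\theta_{M+\mu_1}(\tau)$ on both sides.
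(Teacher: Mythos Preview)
Your treatment of \eqref{vartau+1} is essentially identical to the paper's: both decompose $\vartheta_{\lambda,\nu}$ as $\sum_{\mu}\theta_{M+\mu}$ over those $\mu\in M'/M$ with $\mu+\nu\in\lambda+L$, observe that for such $\mu$ one has $\mu^2+\nu^2\equiv\lambda^2\pmod{2\Z}$, and conclude.

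For \eqref{var-1/tau} the two arguments diverge. You propose to apply theta inversion directly to the definite lattice $M$, obtaining $\theta_{M+\mu}(-1/\tau)=(\tau/i)^{d/2}|M'/M|^{-1/2}\sum_{\mu_1}\e((\mu,\mu_1))\theta_{M+\mu_1}(\tau)$, and then to carry out the finite character-sum identity relating the $\lambda_1$-sum on the left to the $\nu_1$-sum on the right, using $|L'/L|\cdot[L:M\oplus N]^2=|M'/M|\cdot|N'/N|$. This is correct and entirely self-contained, but as you note the character matching is where all the work sits. The paper instead sidesteps this computation: it writes, for $v^+\in G(N)\hookrightarrow G(L)$ and $z_N\in N\otimes\C$, the factorization $\theta^{\mathrm{J}}_{L+\lambda}(\tau,z_N,v^+)=\sum_{\nu\in N'/N}\theta^{\mathrm{J}}_{N+\nu}(\tau,z_N,v^+)\,\overline{\vartheta_{\lambda,\nu}(\tau)}$ and then applies the already-established transformation law \eqref{thetaj-1/tau} to both $\theta^{\mathrm{J}}_{L+\lambda}$ and $\theta^{\mathrm{J}}_{N+\nu}$; linear independence of the $\theta^{\mathrm{J}}_{N+\nu}$ in the variable $z_N$ lets one read off \eqref{var-1/tau} directly. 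Your route is more elementary (one Poisson summation on a definite lattice, no auxiliary Jacobi variable), while the paper's route packages the Gauss-sum bookkeeping into the quoted transformation formula for $\theta^{\mathrm{J}}$ and so is shorter on the page.
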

  \begin{proof}For $\mu\in M'/M$ denote
  $$ \theta_\mu(\tau):=\sum_{m\in\mu+M}\e(-m^2/2\tau).$$ We have \begin{equation}\label{vartheta} \vartheta_{\lambda,\nu}(\tau)=\sum_{\xymatrix@C=0pt@R=0pt@M=1pt{\scriptstyle \mu\in M'/M :\\ \scriptstyle \mu+\nu\in\lambda+L}}\theta_\mu(\tau). \end{equation} Here $\mu +\nu$ is considered as an element in $M'\oplus N'/M\oplus N$ and $\lambda+L$ is a subset of $M'\oplus N'/M\oplus N$. Suppose that $\lambda\in L'/L, \mu \in M'/M, \nu \in N'/N$ satisfy the condition $\mu+\nu\in\lambda+L$. Then
  $$\e(\mu^2/2+\nu^2/2)=\e(\lambda^2/2), $$ and hence
  $$\theta_\mu(\tau+1)=\e(-\mu^2/2)\,\theta_\mu(\tau)= \e(\nu^2/2-\lambda^2/2)\,\theta_\mu(\tau).$$
  Thus, equation \eqref{vartau+1} follows from the previous formula and \eqref{vartheta}.

  Now we prove equation \eqref{var-1/tau}. 
Suppose that $z_N\in N\otimes\C$ and $v^+\in G(N)\hookrightarrow G(L)$. Then the real analytic Jacobi theta function defined in Section \ref{Jacobi} satisfies
  $$ \theta^{\mathrm{J}}_{L+\lambda}(\tau,z_N,v^+)=\sum_{\nu\in N'/N}\theta^{\mathrm{J}}_{N+\nu}(\tau,z_N,v^+)
  \overline{\vartheta_{\lambda,\nu}(\tau)}.$$
  Transformation property \eqref{var-1/tau} follows from the above equation and  transformation property \eqref{thetaj-1/tau} of functions $\theta^{\mathrm{J}}_{L+\lambda}$ and $\theta^{\mathrm{J}}_{N+\nu}$.
  \end{proof}


\begin{theorem}Suppose that the lattices $L$, $M$ and $N$ are defined as above. Then  there is a map $T_{L,N}:M_k(\mathrm{Mp_2(\Z)},\rho_L)\to M_{k+d/2}(\mathrm{Mp_2(\Z)},\rho_N)$ sending a function $f=(f_\lambda)_{\lambda\in L'/L}$ to $g=(g_\nu)_{\nu\in N'/N}$ defined as
$$g_\nu(\tau)=\sum_{\lambda\in L'/L}\vartheta_{\lambda,\nu}(\tau)\,f_\lambda(\tau).$$
In other words $$g=T_{L,N}f $$
where $f$ and $g$ are considered as column vectors.
\end{theorem}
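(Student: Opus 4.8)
Here is how I would prove the theorem.

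The plan is to verify directly that $g=T_{L,N}f$ belongs to $M_{k+d/2}(\mpz,\rho_N)$: that it is holomorphic on $\h$, bounded at the cusp, and satisfies the modular transformation law. Since $\mpz$ is generated by $\widetilde{T}$ and $\widetilde{S}$ of \eqref{generators} and the weight $k+d/2$ slash operator for $\rho_N$ is a genuine action of $\mpz$, it is enough to check the transformation law on these two generators. Holomorphy and the cusp condition are immediate from the shape of the entries $\vartheta_{\lambda,\nu}$: because the sublattice $M$ is negative definite, every exponent $-m^2/2$ occurring in $\vartheta_{\lambda,\nu}(\tau)=\sum\e(-\tau m^2/2)$ is $\geq 0$, so the series converges absolutely to a holomorphic function of $\tau$ that stays bounded as $\Im\tau\to\infty$; multiplying by the holomorphic $f_\lambda$, which is bounded at the cusp since $f\in M_k$, and summing over the finite set $L'/L$ gives the same for each $g_\nu$.

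The action of $\widetilde{T}$ is routine. From $\rho_L(\widetilde{T})e_\lambda=\e(\lambda^2/2)e_\lambda$ one has $f_\lambda(\tau+1)=\e(\lambda^2/2)f_\lambda(\tau)$, while \eqref{vartau+1} gives $\vartheta_{\lambda,\nu}(\tau+1)=\e(\nu^2/2-\lambda^2/2)\vartheta_{\lambda,\nu}(\tau)$; multiplying these and summing over $\lambda\in L'/L$, the factors $\e(\pm\lambda^2/2)$ cancel term by term, so that $g_\nu(\tau+1)=\e(\nu^2/2)g_\nu(\tau)$, which is exactly the $\widetilde{T}$-transformation for $\rho_N$.

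The substantive step is the action of $\widetilde{S}$, and this is where Lemma \ref{l1} does the work. First I would write the $\widetilde{S}$-transformation of $f$ componentwise as $f_\lambda(-1/\tau)=\sqrt{\tau}^{\,2k}\,\kappa_L\,|L'/L|^{-1/2}\sum_{\mu\in L'/L}\e(-(\mu,\lambda))\,f_\mu(\tau)$, where $\kappa_L=i^{(b^-/2-b^+/2)}$ is the root of unity from the definition of $\rho_L(\widetilde{S})$. Substituting into $g_\nu(-1/\tau)=\sum_\lambda\vartheta_{\lambda,\nu}(-1/\tau)f_\lambda(-1/\tau)$ and interchanging the two finite sums,
$$g_\nu(-1/\tau)=\sqrt{\tau}^{\,2k}\,\kappa_L\sum_{\mu\in L'/L}f_\mu(\tau)\Bigl(|L'/L|^{-1/2}\sum_{\lambda\in L'/L}\e(-(\mu,\lambda))\vartheta_{\lambda,\nu}(-1/\tau)\Bigr).$$
Now \eqref{var-1/tau}, with the role of $\lambda$ there played by our $\mu$, rewrites the bracketed sum as $(\tau/i)^{d/2}|N'/N|^{-1/2}\sum_{\nu_1\in N'/N}\e(-(\nu,\nu_1))\vartheta_{\mu,\nu_1}(\tau)$, and re-summing over $\mu$ restores $g_{\nu_1}(\tau)$:
$$g_\nu(-1/\tau)=\sqrt{\tau}^{\,2k}(\tau/i)^{d/2}\,\kappa_L\,|N'/N|^{-1/2}\sum_{\nu_1\in N'/N}\e(-(\nu,\nu_1))g_{\nu_1}(\tau).$$
Finally one matches automorphy factors: $\sqrt{\tau}^{\,2k}(\tau/i)^{d/2}=\sqrt{\tau}^{\,2(k+d/2)}i^{-d/2}$, and since $L$ has signature $(2,b)$ and $N$ has signature $(2,b-d)$ one has $i^{-d/2}\kappa_L=i^{-d/2}i^{\,b/2-1}=i^{(b-d)/2-1}=\kappa_N$, so the displayed identity is precisely the $\widetilde{S}$-law of a form of weight $k+d/2$ and type $\rho_N$.

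I expect the only genuine difficulty to be this last piece of bookkeeping — keeping the powers of $i$, equivalently the square-root branches when $k$ is half-integral, consistent throughout — but it is exactly what Lemma \ref{l1} is designed to absorb: the factor $(\tau/i)^{d/2}$ in \eqref{var-1/tau} is the automorphy factor of the theta series of the negative definite rank-$d$ lattice $M$, and it supplies both the weight shift $d/2$ and the transition $\kappa_L\mapsto\kappa_N$ forced by $\mathrm{sig}(L)=\mathrm{sig}(N)+\mathrm{sig}(M)$. Everything else is formal: the two finite sums interchange without any convergence question, and both generator checks reduce to substituting the transformation laws of $f$ and of the functions $\vartheta_{\lambda,\nu}$.
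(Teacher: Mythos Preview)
Your proof is correct and is exactly the argument the paper intends: its own proof is the single line ``Follows from Lemma~\ref{l1}'', and what you have written is precisely the routine unfolding of that line, checking the $\widetilde{T}$- and $\widetilde{S}$-laws componentwise using \eqref{vartau+1} and \eqref{var-1/tau}. The signature bookkeeping $i^{-d/2}\kappa_L=\kappa_N$ you flag is the only point requiring care, and you handle it correctly.
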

\begin{proof}Follows from Lemma \ref{l1}.\end{proof}

\begin{theorem}\label{Th see-saw}Let $L$, $M$, $N$ be as above. Denote by  $i:G(N)\to G(L)$  a natural embedding induced by inclusion $N\subset L$.  Then, for $v^+\in G(N)$ and the theta lift of a function $f\in \widehat{M}_{1-b/2}^{\,!}(\slz,\rho_L)$  the following holds \begin{equation} \label{see-saw}\Phi_L(i(v^+),f)=\Phi_N(v^+,\theta_{L,N}(f)).\end{equation}
\end{theorem}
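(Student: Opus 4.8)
The plan is to reduce \eqref{see-saw} to a pointwise identity between the two theta integrands on $\h$ and then to observe that such an identity automatically survives Borcherds's regularization. First I would split the Siegel theta kernel $\Theta_L$ restricted to the embedded Grassmannian point $i(v^+)$. Since $v^+\subset N\otimes\R=V_1$ and $V_1\perp V_2=M\otimes\R$, the orthogonal complement of $i(v^+)$ in $L\otimes\R$ equals $v^-\oplus(M\otimes\R)$, where $v^-$ is the orthogonal complement of $v^+$ inside $N\otimes\R$. Writing $m\in L'\subseteq M'\oplus N'$ as $m=\mu+\nu$ with $\mu=\mathrm{pr}_M(m)\in M'$, $\nu=\mathrm{pr}_N(m)\in N'$, the Gaussian in the summand of $\Theta_L(\tau,i(v^+))$ indexed by $m$ equals $\e(\tau\nu_{v^+}^2/2+\bar\tau(\nu_{v^-}^2+\mu^2)/2)$. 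Using $\mathrm{pr}_M(L')=M'$, $\mathrm{pr}_N(L')=N'$ and $M\oplus N\subseteq L$, the correspondence $m\leftrightarrow(\mu,\nu)$ identifies $\{m\in L':m\equiv\lambda\bmod L\}$ with $\{(\mu,\nu)\in M'\times N':\mu+\nu\in\lambda+L\}$; sorting the (absolutely convergent) defining sum of $(\Theta_L(\tau,i(v^+)))_\lambda$ by the classes of $\mu$ and $\nu$ then factors it as a sum over $N'$ — equal to $y^{-(b-d)/2}(\Theta_N(\tau,v^+))_\nu$ — times a sum over $M'$ — equal to $\overline{\vartheta_{\lambda,\nu}(\tau)}$. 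This is nothing but the $z_N=0$ case of the real-analytic Jacobi identity $\theta^{\mathrm{J}}_{L+\lambda}(\tau,z_N,v^+)=\sum_{\nu\in N'/N}\theta^{\mathrm{J}}_{N+\nu}(\tau,z_N,v^+)\overline{\vartheta_{\lambda,\nu}(\tau)}$ recorded in the proof of Lemma~\ref{l1}, and after keeping track of $y^{b/2}=y^{(b-d)/2}y^{d/2}$ it gives
$$(\Theta_L(\tau,i(v^+)))_\lambda=y^{d/2}\sum_{\nu\in N'/N}(\Theta_N(\tau,v^+))_\nu\,\overline{\vartheta_{\lambda,\nu}(\tau)}.$$

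Next I would substitute this into $\langle f(\tau),\overline{\Theta_L(\tau,i(v^+))}\rangle=\sum_{\lambda\in L'/L}f_\lambda(\tau)\,\overline{(\Theta_L(\tau,i(v^+)))_\lambda}$ and interchange the two finite sums. Recognising $\sum_{\lambda\in L'/L}\vartheta_{\lambda,\nu}(\tau)f_\lambda(\tau)=(\theta_{L,N}(f))_\nu(\tau)$ from the definition of the map $T_{L,N}=\theta_{L,N}$, this yields the pointwise identity
$$\langle f(\tau),\overline{\Theta_L(\tau,i(v^+))}\rangle=y^{d/2}\,\langle\theta_{L,N}(f)(\tau),\overline{\Theta_N(\tau,v^+)}\rangle\qquad(\tau\in\h).$$
Since $N$ has signature $(2,b-d)$, the weight factor occurring in $\Phi_N$ is $y^{-1-(b-d)/2}$, and $y^{d/2}\cdot y^{-1-b/2}=y^{-1-(b-d)/2}$, so multiplying this identity by $y^{-1-b/2}$ turns the integrand defining $\Phi_L(i(v^+),f)$ into the integrand defining $\Phi_N(v^+,\theta_{L,N}(f))$.

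Finally I would pass from the integrands to the regularized integrals. Both are computed over the same quotient $\slz\backslash\h$ by inserting $y^{-s}$, integrating over the truncated domains $\mathcal{F}_t$, letting $t\to\infty$, continuing analytically in $s$, and taking the constant term of the Laurent expansion at $s=0$; since the two integrands agree pointwise they still agree after inserting $y^{-s}$, so the truncated integrals coincide for every $t$ and every $s$ with $\Re(s)\gg0$, hence so do their limits, analytic continuations, and constant terms at $s=0$, which is precisely \eqref{see-saw}. I would also record that $\theta_{L,N}(f)$ is an admissible input for the $N$-lift: by Lemma~\ref{l1} it lies in $\widehat{M}_{1-(b-d)/2}^{\,!}(\slz,\rho_N)$, and because $M$ is negative definite each $\vartheta_{\lambda,\nu}$ is a finite $\Z$-linear combination of theta series of a negative definite lattice and hence bounded on $\h$, so multiplication by it preserves the Fourier-expansion and growth hypotheses under which the regularization is defined; in particular the regularized $N$-integral exists whenever the regularized $L$-integral does.

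The genuinely substantive input — the splitting of the theta kernel — is already furnished by Lemma~\ref{l1}, so I do not expect a real conceptual obstacle. The step needing the most care is the bookkeeping around it: the $y^{b^-/2}$ discrepancy with Borcherds's normalisation of $\Theta_M$ noted after its definition, the placement of the complex conjugation in $\vartheta_{\lambda,\nu}$, and the correct weight and representation of $\theta_{L,N}(f)$, together with the routine but necessary check that the regularization respects the integrand identity.
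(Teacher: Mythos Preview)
Your proposal is correct and follows essentially the same route as the paper: both split each $l\in L'$ as $\mathrm{pr}_M(l)+\mathrm{pr}_N(l)$ to factor $\Theta_{L+\lambda}(\tau,i(v^+))$ as $\sum_{\nu}\Theta_{N+\nu}(\tau,v^+)\,\overline{\vartheta_{\lambda,\nu}(\tau)}$, deduce the pointwise equality of the two integrands, and conclude that the regularized integrals coincide. Your write-up is in fact more careful than the paper's on the $y^{b^-/2}$ bookkeeping, the survival of the identity through the regularization procedure, and the verification that $\theta_{L,N}(f)$ is an admissible input for $\Phi_N$.
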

\begin{proof}
For a vector $l\in L'$ denote $m=\mathrm{pr}_M(l)$ and $n=\mathrm{pr}_N(l)$. Recall that $m\in M'$ and $n\in N'$. Since $v^+$ is an element of $G(N)$  it is orthogonal to $M$. We have $$l^2_{v^+}=n^2_{v^+},\;l^2_{v^-}=m^2+n^2_{v^-}.$$ Thus for $\lambda\in L'/L$ we obtain $$\Theta_{\lambda+L}(\tau,v^+)=\sum_{l\in\lambda+L}\e(\tau l^2_{v^+}/2+\bar{\tau} l^2_{v^-}/2)= $$
$$\sum_{\xymatrix@R=0pt@C=0pt@M=1pt{\scriptstyle m\in M',\, n\in N': \\ \scriptstyle m+n\in\lambda+L}}\e(\tau n^2_{v^+}/2+\bar{\tau} n^2_{v^-}/2+\bar{\tau}m^2/2). $$
Since $N\subset L$ we can rewrite this sum as
$$ \Theta_{\lambda+L}(\tau,v^+)=\sum_{\nu\in N'/N}\Theta_{\nu+N}(\tau,v^+)\overline{\vartheta_{\nu,\lambda}(\tau)}.$$
Thus, we see that for $f=(f_\lambda)_{\lambda\in L'/L}$ the following scalar products are equal
$$\langle f,\overline{\Theta_L(\tau,v^+)}\rangle =\langle T_{L,N}(f),\overline{\Theta_N(\tau,v^+)}\rangle .$$ So, the regularized integrals \eqref{theta} of both sides of the equality are also equal.  \end{proof}

\noindent {\bf Remark.} Theorem \ref{Th see-saw} works even in the case when $v^+$ is a singular point of $\Phi_L(v^+,f)$.  If the constant terms of $f$ and $T_{L,N}(f)$ are different, then subvariety $G(N)$ lies in singular locus of  $\Phi_L(v^+,f)$. On the other hand, if constant terms of $f$ and $T_{L,N}(f)$ are equal then, singularities cancel at the points of $G(N)$.
\section{Lattice $M_2(\Z)$}\label{m2z}
Consider the lattice of integral $2\times 2$ matrices denoted by $M_2(\Z)$. Equipped with a quadratic form  $q(x):=-\det x$ it becomes an even unimodular lattice (in our notations $x^2=2q(x)$).

The Grassmannian $G(M_2(\Z))$ turns out to be isomorphic to $\h\times\h$. This isomorphism can be constructed in the following way. For the pair of points $(\tau_1,\tau_2)\in\h\times\h$ consider the element of the norm zero $$Z=\left(\vcenter{\xymatrix@R=0pt@C=0pt{\tau_1\tau_2&\tau_1\\ \tau_2& 1}}\right)\in M_2(\Z)\otimes \C.$$ Define $v^+(\tau_1,\tau_2)$ be the vector subspace of $M_2(\Z)\otimes\R$ spanned by two vectors $X=\Re(Z)$ and $Y=\Im(Z)$.
The group $\slz\times\slz$ acts on $M_2(\Z)$ by $(\gamma_1,\gamma_2)(x)=\gamma_1 x\gamma_2^t$ and preserves the norm.The action of $\slz\times\slz$ on the Grassmannian agrees with the action on $\h\times\h$ by fractional linear transformations
$$(\gamma_1,\gamma_2)(v^+(\tau_1,\tau_2))=v^+(\gamma_1(\tau_1),\gamma_2(\tau_2)). $$

We have $$ X^2=Y^2=\frac 12 (Z,\overline{Z})=-\frac 12 (\tau_1-\bar{\tau_1})(\tau_2-\bar{\tau_2}),$$
$$(X,Y)=Z^2=0. $$
For $l=\left( \vcenter{\xymatrix@R=0pt@C=0pt{a&b\\c&d}}\right)\in M_2(\Z)$ and $v^+=v^+(\tau_1,\tau_2)$ we have
$$ l^2_{v^+}/2=\frac{(l,Z)(l,\overline{Z})}{(Z,\overline{Z})}
=\frac{|d\tau_1\tau_2-c\tau_1-b\tau_2+a|^2}
{-(\tau_1-\bar{\tau}_1)(\tau_2-\bar{\tau}_2)}.$$
Denote $$\Theta(\tau;\tau_1,\tau_2):=\Theta_{M_2(\Z)}(\tau,v^+(\tau_1,\tau_2)) $$
where $\tau=x+iy$. Considered as a function of $\tau$ $\Theta$ belongs to $\mathfrak{M}_0(\slz)$ and we can  explicitly write this  function as
$$\Theta(\tau;\tau_1,\tau_2)=y\sum_{a,b,c,d\in\Z} \e\left( \frac{|a\tau_1\tau_2+b\tau_1+c\tau_2+d|^2}{-(\tau_1-\bar{\tau}_1)
(\tau_2-\bar{\tau}_2)}(\tau-\bar{\tau})-(ad-bc)\bar{\tau}\right)$$

$$=y\sum_{a,b,c,d\in\Z} \e\left( \frac{|a\tau_1\tau_2+b\tau_1+c\tau_2+d|^2}{-(\tau_1-\bar{\tau}_1)
(\tau_2-\bar{\tau}_2)}\tau-\frac{|a\tau_1\bar{\tau}_2+b\tau_1+c\bar{\tau}_2+d|^2}
{-(\tau_1-\bar{\tau}_1)(\tau_2-\bar{\tau}_2)}\bar{\tau}\right). $$

 \section{Higher Green's functions as theta lifts}

The key point of our proof is the following observation
\begin{proposition}\label{key} Denote by $\Delta^z$ the hyperbolic Laplacian with respect to variable $z$. For the function $\Theta$ defined in previous section the following identities hold
$$\Delta^\tau \Theta(\tau;\tau_1,\tau_2)= \Delta^{\tau_1} \Theta(\tau;\tau_1,\tau_2)=\Delta^{\tau_2} \Theta(\tau;\tau_1,\tau_2).$$
\end{proposition}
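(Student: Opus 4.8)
The plan is to verify the three identities term by term in the explicit Fourier expansion of $\Theta$ obtained in Section~\ref{m2z}. For $\tau$ and $(\tau_1,\tau_2)$ confined to compact subsets of $\h$ and $\h\times\h$ the general term $\psi_l:=y\,\e\bigl(\tau\,l^2_{v^+}/2+\bar\tau\,l^2_{v^-}/2\bigr)$, $l\in M_2(\Z)$, of that sum and its first two derivatives in $\tau$, $\tau_1$, $\tau_2$ are $O(e^{-c\|l\|^{2}})$ for some $c>0$ (after separating the $\tau$–phase one sees that $|\psi_l|=y\,e^{-2\pi(Q+Q')y}$ with $Q+Q'$ a positive definite quadratic form in the entries of $l$), so the series may be differentiated term by term and it suffices to treat a single $\psi_l$. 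The explicit formula of Section~\ref{m2z} is invariant under $l\mapsto l^{t}$ combined with $\tau_1\leftrightarrow\tau_2$, so $\Delta^{\tau_1}\Theta=\Delta^{\tau_2}\Theta$ is immediate, and only $\Delta^{\tau}\psi_l=\Delta^{\tau_1}\psi_l$ remains.

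Fix $l=\left(\begin{smallmatrix}a&b\\ c&d\end{smallmatrix}\right)$ and put $Q:=l^2_{v^+}/2$ and $Q':=-l^2_{v^-}/2$, so that by Section~\ref{m2z}
\[
Q=\frac{|a\tau_1\tau_2+b\tau_1+c\tau_2+d|^{2}}{4y_1y_2},\qquad
Q'=\frac{|a\tau_1\bar\tau_2+b\tau_1+c\bar\tau_2+d|^{2}}{4y_1y_2},
\]
and $Q-Q'=q(l)=-\det l$ is independent of $\tau_1$ and $\tau_2$. The key point is the factorization
\[
\psi_l=y\,\e(q(l)\bar\tau)\,e^{-4\pi yQ},
\]
in which the first two factors do not involve $\tau_1,\tau_2$ while the whole $\tau_1$–dependence sits in $e^{-4\pi yQ}$; moreover, as a function of $\tau=x+iy$ alone, $\psi_l=y\,e^{2\pi iq(l)x}\,e^{(2\pi q(l)-4\pi Q)y}$ with $Q$ treated as a constant. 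A short computation with $\Delta^{\tau}=-y^{2}(\partial_x^{2}+\partial_y^{2})$ then gives
\[
\Delta^{\tau}\psi_l=\bigl(4\pi(Q+Q')\,y-16\pi^{2}QQ'\,y^{2}\bigr)\psi_l .
\]

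For $\Delta^{\tau_1}$, the factors $4\pi y$ and $\e(q(l)\bar\tau)$ are constant in $\tau_1$, and from $\Delta^{\tau_1}(e^{-tQ})=\bigl(-t\,\Delta^{\tau_1}Q-t^{2}y_1^{2}\bigl((\partial_{x_1}Q)^{2}+(\partial_{y_1}Q)^{2}\bigr)\bigr)e^{-tQ}$ with $t=4\pi y$ we obtain $\Delta^{\tau_1}\psi_l=\bigl(-4\pi y\,\Delta^{\tau_1}Q-16\pi^{2}y^{2}y_1^{2}((\partial_{x_1}Q)^{2}+(\partial_{y_1}Q)^{2})\bigr)\psi_l$. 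Hence the proposition reduces to the two scalar identities
\[
\Delta^{\tau_1}Q=-(Q+Q'),\qquad y_1^{2}\bigl((\partial_{x_1}Q)^{2}+(\partial_{y_1}Q)^{2}\bigr)=QQ' ,
\]
after which $\Delta^{\tau_1}\psi_l=\Delta^{\tau}\psi_l$ follows by comparison with the display above. Both come out of one elementary lemma: writing $Q=|P|^{2}/(4y_1y_2)$ with $P=(a\tau_2+b)\tau_1+(c\tau_2+d)$ holomorphic in $\tau_1$, one checks that for arbitrary constants $\alpha,\beta$ the function $F:=|\alpha\tau_1+\beta|^{2}/y_1$ satisfies
\[
\Delta^{\tau_1}F=-2F-4\Im(\alpha\bar\beta),\qquad y_1^{2}\bigl((\partial_{x_1}F)^{2}+(\partial_{y_1}F)^{2}\bigr)=F\bigl(F+4\Im(\alpha\bar\beta)\bigr).
\]
Taking $\alpha=a\tau_2+b$, $\beta=c\tau_2+d$ one has $F=4y_2Q$ and $\Im(\alpha\bar\beta)=y_2\det l$, hence $F+4\Im(\alpha\bar\beta)=4y_2(Q+\det l)=4y_2Q'$; substituting these into the lemma gives exactly the two identities.

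The one non-mechanical piece is the lemma for $F$: a short direct calculation of $\partial_{\tau_1}F$, $\partial_{\bar\tau_1}F$ and $\partial_{\tau_1}\partial_{\bar\tau_1}F$ using the holomorphy of $\alpha\tau_1+\beta$ and $\partial_{\tau_1}y_1=\tfrac1{2i}$, together with the bookkeeping that identifies $F+4\Im(\alpha\bar\beta)$ with $4y_2Q'$. The only subtle point there is sign tracking, in particular the convention $q(x)=-\det x$; everything else — termwise differentiation, the $\tau_1\leftrightarrow\tau_2$ symmetry, and the two Laplacian computations — is routine once the factorization $\psi_l=y\,\e(q(l)\bar\tau)\,e^{-4\pi yQ}$ has separated the variable $\tau$ and exhibited both Laplacians as acting through the single quantity $Q$ and the constant $q(l)$.
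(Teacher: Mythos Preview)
Your argument is correct. The reduction to a single term $\psi_l$, the factorisation $\psi_l=y\,\e(q(l)\bar\tau)\,e^{-4\pi yQ}$, the two computations of $\Delta^\tau\psi_l$ and $\Delta^{\tau_1}\psi_l$, and the verification of the scalar identities $\Delta^{\tau_1}Q=-(Q+Q')$ and $y_1^{2}|\nabla_{\tau_1}Q|^{2}=QQ'$ via the lemma on $F=|\alpha\tau_1+\beta|^{2}/y_1$ all check out (including the bookkeeping $\Im(\alpha\bar\beta)=y_2\det l$ and $F+4\Im(\alpha\bar\beta)=4y_2Q'$, which rely on $a,b,c,d\in\R$ and $Q-Q'=q(l)=-\det l$).

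By way of comparison: the paper does not supply a proof of this proposition at all; it only remarks that ``a similar identity can be found in \cite{Br}.'' So your write-up is a genuine addition rather than a reproduction. What Bruinier does in the cited reference is closer in spirit to your approach than to anything different---one differentiates the Siegel theta kernel termwise and exploits the fact that, for a lattice of signature $(2,b^-)$, the Laplacian in the Grassmannian variable and the Laplacian in $\tau$ act on each summand through the same combination of $l_{v^+}^2$ and $l_{v^-}^2$. Your concrete realisation via the elementary lemma on $|\alpha\tau_1+\beta|^{2}/y_1$ is a pleasant, fully explicit variant of that general calculation, tailored to the special isomorphism $G(M_2(\Z))\cong\h\times\h$; it has the advantage of being entirely self-contained and of making the $\tau_1\leftrightarrow\tau_2$ symmetry (hence $\Delta^{\tau_1}\Theta=\Delta^{\tau_2}\Theta$) transparent via $l\mapsto l^{t}$.
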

A similar identity can be found in \cite{Br}.

Suppose that $\bm{\lambda}=\{\lambda_m\}_{m=1}^\infty$ is a relation on $S_{2k}(\slz)$ (definition is given at the introduction). Then there exists a unique weakly holomorphic modular form $g_{\bm{\lambda}}$ of weight $2-2k$ with Fourier expansion of the form $$ \sum _{m}\lambda_m\, q^{-m}+O(1).$$

Define curves $T_m\subset \h\times\h$.
For a relation $\bm{\lambda}$ consider a divisor $$D_{\bm{\lambda}}:=\sum_m\lambda_m T_m. $$
Denote by $S_{\bm{\lambda}}$ the support of $D_{\bm{\lambda}}$. It follows from the properties (i), (iv) of Green's function given at the introduction that the singular locus of $G_{k,\bm{\lambda}}$ is equal to $S_{\bm{\lambda}}$.

Consider the function $h_{\bm{\lambda}}:=R^{k-1} (g_{\bm{\lambda}})$ which belongs to $\widehat{M}^{\,!}_0(\slz)$.
\begin{theorem}\label{Th g} The following identity holds \label{g} $$G_{k,\bm{\lambda}}(\tau_1,\tau_2)= \Phi_{M_2(\Z)}(v^+(\tau_1,\tau_2),h_{\bm{\lambda}}).$$ Here \begin{equation} \label{limFt} \Phi_{M_2(\Z)}(v^+(\tau_1,\tau_2),h_{\bm{\lambda}})= \lim_{t\to\infty}\int\limits_{F_t} h_{\bm{\lambda}}(\tau)\,\overline{\Theta(\tau;\tau_1,\tau_2)}\,y^{-2}dx\,dy.\end{equation} \end{theorem}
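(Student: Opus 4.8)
The plan is to verify that $\Phi:=\Phi_{M_2(\Z)}(v^+(\tau_1,\tau_2),h_{\bm{\lambda}})$ has the characterizing properties of $G_{k,\bm{\lambda}}$ — real-analyticity off $S_{\bm{\lambda}}$, $\slz\times\slz$-invariance, the eigenvalue equation $\Delta^{\tau_i}(\cdot)=k(1-k)(\cdot)$ for $i=1,2$, the prescribed logarithmic singularity along $S_{\bm{\lambda}}$, and vanishing at the cusps — and then to conclude by a uniqueness argument. First I would pin down $h_{\bm{\lambda}}$: since $g_{\bm{\lambda}}\in M^!_{2-2k}(\slz)$ is weakly holomorphic it is annihilated by $L_{2-2k}$, so it lies in $F_{k,2-2k}$ (the defining eigenvalue $k(1-k)+\tfrac{(2-2k)(-2k)}{4}$ being $0$, as for every holomorphic form), whence by Proposition~\ref{diff}(ii) applied $k-1$ times $h_{\bm{\lambda}}=R^{k-1}(g_{\bm{\lambda}})\in F_{k,0}$ and $\Delta^{\tau}h_{\bm{\lambda}}=k(1-k)\,h_{\bm{\lambda}}$. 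A look at the integrand in \eqref{limFt} as $\Im\tau\to\infty$ shows that, apart from exponentially small terms, the only surviving contribution comes from the constant term of $g_{\bm{\lambda}}$ and is by \eqref{56} of size $y^{-k}$, hence integrable for $k>1$; so the $s$-regularization of Section~\ref{theta int} is trivial here and $\Phi$ is genuinely given by the plain limit \eqref{limFt}. The $\slz\times\slz$-invariance of $\Phi$ is then immediate from the functoriality of the theta lift, because $M_2(\Z)$ is unimodular: $h_{\bm{\lambda}}$ is scalar valued, $\mathrm{Aut}(M_2(\Z),h_{\bm{\lambda}})=\mathrm{Aut}(M_2(\Z))\supset\slz\times\slz$, and this group acts on $G(M_2(\Z))\cong\h\times\h$ by the two fractional linear actions (Section~\ref{m2z}).

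Next I would establish the two substantive local statements on $(\h\times\h)\setminus S_{\bm{\lambda}}$. For the Laplace equation, I differentiate \eqref{limFt} under the integral sign, use Proposition~\ref{key} to replace $\Delta^{\tau_i}\Theta$ by $\Delta^{\tau}\Theta$, integrate by parts in $\tau$ (the weight-zero Laplacian being self-adjoint for $y^{-2}\,dx\,dy$, the boundary term on $\Im\tau=t$ tending to $0$ as $t\to\infty$ by the growth estimate above), and invoke $\Delta^{\tau}h_{\bm{\lambda}}=k(1-k)h_{\bm{\lambda}}$; this gives $\Delta^{\tau_i}\Phi=k(1-k)\Phi$. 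For the singularity, Theorem B1 confines the singular locus of $\Phi$ to the divisors $\lambda^{\perp}$, $\lambda\in M_2(\Z)$, $\lambda^2<0$, i.e.\ $\det\lambda=m>0$; the formula for $l^2_{v^+}$ in Section~\ref{m2z} identifies $\lambda^{\perp}$ with a branch of $T_m$; and the relevant coefficient there involves the principal part of $h_{\bm{\lambda}}$, which by \eqref{56} is non-zero at $q^{-m}$ exactly when $\lambda_m\neq0$, so the singular locus of $\Phi$ is precisely $S_{\bm{\lambda}}$. It remains to match the leading logarithmic term: near a smooth point of $T_m$ only the $t=0$ summand in Theorem B1 is unbounded, with coefficient the $q^{-m}y^{0}$-coefficient of $h_{\bm{\lambda}}$, which by \eqref{56} equals $(-1)^{k-1}m^{k-1}\lambda_m$ (it comes only from the top, $r=k-1$, term of that expansion), and — tracking the constant relating $\lambda^2_{v^+}$ to $|\tau_1-\gamma\tau_2|^2$, the factor $-4$ in Borcherds' normalization (cf.\ Theorem B3(3)), and the definition of $T_m$ — one checks that this reproduces exactly the leading term $\lambda_m m^{k-1}\log|\tau_1-\gamma\tau_2|$ of $G_{k,\bm{\lambda}}$ along $T_m$. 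I expect this last verification — together with the parallel bookkeeping needed to account for the inclusions $T_m\subset T_{mn^2}$ coming from parallel lattice vectors — to be the main obstacle; it is exactly the factor $m^{k-1}$ in $G_{k,\bm{\lambda}}$ and the choice of exponent in $h_{\bm{\lambda}}=R^{k-1}g_{\bm{\lambda}}$ that make the two sides agree.

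To finish, put $H:=\Phi-G_{k,\bm{\lambda}}$. By the leading-log matching, $H$ is bounded near $S_{\bm{\lambda}}$ and satisfies $\Delta^{\tau_i}H=k(1-k)H$ off the real-codimension-two set $S_{\bm{\lambda}}$; since bounded singularities of solutions of an elliptic equation across a set of codimension at least two are removable, $H$ extends to a real-analytic, $\slz\times\slz$-invariant solution of $\Delta^{\tau_i}H=k(1-k)H$ on all of $\h\times\h$. That $H$ vanishes at the cusps follows from property (v) for $G_k$ together with the Fourier expansion of $\Phi$ in Theorem B2: the constant term $c_0(0,0)$ of $h_{\bm{\lambda}}$ vanishes — by \eqref{56} it could only arise as $0^{k-1}$ times the constant term of $g_{\bm{\lambda}}$ — and inspecting the remaining terms of that expansion then shows $\Phi\to0$ at the cusps. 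Hence for each fixed $\tau_2$ the function $\tau_1\mapsto H(\tau_1,\tau_2)$ is a real-analytic $\slz$-invariant eigenfunction of the hyperbolic Laplacian on the finite-volume quotient $\slz\backslash\h$ that vanishes at the cusp, so it is square-integrable; and since the eigenvalue $k(1-k)$ is negative for $k>1$ while the discrete spectrum of the (positive) hyperbolic Laplacian is non-negative, $H(\,\cdot\,,\tau_2)\equiv0$. As $\tau_2$ is arbitrary, $\Phi=G_{k,\bm{\lambda}}$, which is the assertion.
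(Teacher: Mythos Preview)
The proposal is correct and follows essentially the same route as the paper: verify that $\Phi$ satisfies properties (i)--(v) using Proposition~\ref{key} for the Laplace equation, Theorem~B1 for the singularity structure, and Theorem~B2 for the vanishing at the cusps, then conclude by uniqueness. Your presentation adds an explicit spectral-theoretic uniqueness argument and the observation $c_0(0,0)=0$, while the paper carries out the Fourier expansion for~(v) in more detail; both treatments are equally informal about pinning down the exact constant in the logarithmic singularity for~(iv).
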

\begin{proof}
We verify that the function $\Phi_{M_2(\Z)}(v^+(\tau_1,\tau_2),h_{\bm{\lambda}})$ satisfy conditions (i)-(iv) listed at the introduction.

 It follows from Theorem B1 (Theorem 6.2 \cite{Bo1} p. 24) that the limit \eqref{limFt} exists for all $\tau_1,\tau_2\in\h\times\h\setminus S_{\bm{\lambda}}$, moreover, it defines a real-analytic function on this set. For the convenience of the reader we repeat the argument given in \cite{Bo1}. The function $h_{\bm{\lambda}}$ has the Fourier expansion $$h_{\bm{\lambda}}(\tau)=\sum_{\xymatrix@R=0pt@C=0pt@M=2pt{\scriptstyle n\in\Z\\ \scriptstyle n\gg -\infty}} c(n,y)\e(n\tau). $$  Fix $v^+=v^+(\tau_1,\tau_2)$ for some $\tau_1,\tau_2\in\h\times\h$. 
  For $t>1$ the set $F_t$ can be decomposed into two parts $F_t=F_1\cup \Pi_t$ where $\Pi_t$ is a rectangle $\Pi_t=[-1/2,1/2]\times[1,t]$. It suffices to show that the limit
 $$\lim_{t\to\infty}\int\limits_{\Pi_t} h_{\bm{\lambda}}(\tau)\,\overline{\Theta_{M_2(\Z)}(\tau;v^+)}\,y^{-2}dx\,dy $$ exists for all $(\tau_1,\tau_2)\notin S_{\bm{\lambda}}$. It can be seen from the following computation
 \begin{align*}&\int\limits_{\Pi_t} h_{\bm{\lambda}}(\tau)\,\overline{\Theta_{M_2(\Z)}(\tau;v^+)}\,y^{-2}dx\,dy= \\
 &\int\limits_{\Pi_t} \sum_{n\in\Z}\sum_{l\in M_2(\Z)} c(n,y)\e(n\tau)\overline{\e(\tau l_{v^+}^2/2+\bar{\tau}l_{v^-}^2/2)} y^{-1}dx\,dy=\\
 &\int\limits_{-1/2}^{1/2}\int\limits_1^t \sum_{n\in\Z}\sum_{l\in M_2(\Z)} c(n,y)\e((n-l^2/2)x)\exp(-2\pi y l_{v^+}^2) y^{-1}\,dx\,dy= \\
 &\int\limits_1^t \sum_{l\in M_2(\Z)} c(l^2/2,y) \exp(-2\pi y l_{v^+}^2) y^{-1}\,dy. \end{align*}

Properties (i) and (iv) follow from Theorem B1 (Theorem 6.2 \cite{Bo1} p. 24).

Property (ii) is obvious since the function $\Theta(\tau;\tau_1,\tau_2)$ is $\slz$-invariant in variables $\tau_1$ and $\tau_2$.

Property (iii) formally follows from Proposition \ref{key}.
We have
$$\Delta^{\tau_1}\Phi_{M_2(\Z)}(h_{\bm{\lambda}},v^+(\tau_1,\tau_2))=\lim_{t\to\infty}\int\limits_{F_t} h_{\bm{\lambda}}(\tau)\overline{\Delta^{\tau_1}\Theta(\tau;\tau_1,\tau_2)}y^{-2}dx\,dy. $$
Using Proposition \ref{key} we arrive at $$\Delta^{\tau_1}\Phi_{M_2(\Z)}(h_{\bm{\lambda}},v^+(\tau_1,\tau_2))=\lim_{t\to\infty}\int\limits_{F_t} h_{\bm{\lambda}}(\tau)\overline{\Delta^{\tau}\Theta(\tau;\tau_1,\tau_2)}y^{-2}dxdy. $$
It follows from Stokes' theorem that $$\int\limits_{F_t} h_{\bm{\lambda}}(\tau)\overline{\Delta^{\tau}\Theta(\tau;\tau_1,\tau_2)}y^{-2}dxdy-\int\limits_{F_t} \Delta h_{\bm{\lambda}}(\tau)\overline{\Theta(\tau;\tau_1,\tau_2)}y^{-2}dxdy= $$ $$\left.\int\limits_{-1/2}^{1/2} (h_{\bm{\lambda}}\,\overline{L_0(\Theta)}-L_0(h_{\bm{\lambda}})\,\overline{\Theta})y^{-2}dx\right|_{y=t}. $$
This expression tends to zero as $t$ tends to infinity.
Since $g_{\bm{\lambda}}\in F_{k,2-2k}$ it follows from Proposition \ref{diff} that  $\Delta h_{\bm{\lambda}}= k(1-k) h_{\bm{\lambda}}$. Thus, we see that theta lift $\Phi_{M_2(\Z)}(h_{\bm{\lambda}},v^+)$ satisfies the desired differential equation $$\Delta^{\tau_i}\Phi_{M_2(\Z)}(h_{\bm{\lambda}},v^+(\tau_1,\tau_2))=k(1-k)\Phi_{M_2(\Z)}(h_{\bm{\lambda}},v^+(\tau_1,\tau_2)),\;i=1,2.$$

It remains to prove (v). We can compute the Fourier expansion of $\Phi_{M_2(\Z)}(v^+(\tau_1,\tau_2),h_{\bm{\lambda}})$ using Theorem B2.
 We select a primitive norm zero vector  $m:=\left( \vcenter{\xymatrix@R=0pt@C=0pt{1&0\\0&0}}\right)\in M$ and choose $m':=\left( \vcenter{\xymatrix@R=0pt@C=0pt{0&0\\0&1}}\right)$ so that $(m,m')=1$. For this choice of vectors $m,\;m'$ the tube domain $\mathcal{H}$ defined by equation \eqref{mathcal{H}} is isomorphic to $\h\times\h$ and the map between $\h\times\h$ and Grassmanian  $G(M_2(\Z)$ is given by $$(\tau_1,\tau_2)\to v^+(\tau_1,\tau_2). $$
The lattice $K=(M\cap m^\bot)/m$ can be identified with $$M\cap m^\bot\cap m'^\bot=\left\{\left.\left( \vcenter{\xymatrix@R=0pt@C=0pt{0&b\\c&0}}\right)\right|b,c\in\Z\right\}.$$
 Denote $x_i=\Re(\tau_i)$ and $y_i=\Im(\tau_i)$ for $i=1,2$. The subspace $w^+(\tau_1,\tau_2)\in G(K)$ is equal to $$\R\left( \vcenter{\xymatrix@R=0pt@C=0pt{0&y_1\\y_2&0}}\right). $$ 
Suppose that $$h_{\bm{\lambda}}(\tau)=\sum_{n\in\Q} c (n,y)\,\e(n\tau)$$
where $$c (n,y)=\sum_{t\geq 0}b(n,t)y^{-t}. $$ 
We can rewrite equality \eqref{Fourier} as
\begin{equation}\label{Fourier1} \Phi_M(v^+,h_{\bm{\lambda}})=\frac{1}{\sqrt{2}|m_{v^+}|}\Phi_K(w^+,h)+
\frac{\sqrt{2}}{|m_{v^+}|}\sum_{l\in K}\sum_{n>0}\e((nl,m_v''))\times\end{equation}
\begin{equation*}\times\int\limits_0^\infty c(l^2/2,y)\exp(-\pi n^2/2y m^2_{v^+}-2\pi y l_{w^+}^2)y^{-3/2}dy =\end{equation*}

$$=\sqrt{y_1 y_2}\,\Phi_K(w^+(\tau_1,\tau_2),f_K)+
\frac{1}{\sqrt{y_1 y_2}}\sum_{l\in K}\sum_{n>0}\e((nl,u))\times$$
\begin{equation*}\times\int\limits_0^\infty c(l^2/2,y)\exp(-\frac{\pi n^2 y_1 y_2}{ y}-\pi y \frac{(l,v)^2}{y_1y_2})y^{-3/2}dy, \end{equation*}
where $u=\Re\left( \vcenter{\xymatrix@R=0pt@C=0pt{0&\tau_1\\ \tau_2&0}}\right)$ and $v=\Im\left( \vcenter{\xymatrix@R=0pt@C=0pt{0&\tau_1\\ \tau_2&0}}\right)$.
 We choose a primitive norm $0$ vector $r=\left( \vcenter{\xymatrix@R=0pt@C=0pt{0&1\\0&0}}\right)\in K$. It follows from Theorem 10.2 \cite{Bo1} that \begin{equation}\label{PhiK} \Phi_K(w^+,h_{\bm{\lambda}})=\sum_t b(0,t) (2 r_{w^+}^2)^{t+1/2} \pi^{-t-1}\Gamma(t+1)(-2\pi i)^{2t+2} B_{2t+2}/(2t+2)!=$$
 $$ \sum_t b(0,t) (y_2/y_1)^{t+1/2} \pi^{-t-1}\Gamma(t+1)(-2\pi i)^{2t+2} B_{2t+2}/(2t+2)!.\end{equation}
 In the case $l_{w^+}\neq 0$ it follows from Lemma 7.2
 \begin{equation}\label{neq0}\int_{y>0}c(l^2/2,y)\exp(-\pi n^2/2y m^2_{v^+}-\pi y l_{w^+}^2)y^{-3/2}dy=\end{equation}
 $$\sum_t 2b(l^2/2,t)(2|m_{v^+}|\,|l_{w^+}|/n)^{t+1/2} K_{-t-1/2}(2\pi n|l_{w^+}|/|m_{v^+}|). $$
 In case $l_{w^+}= 0$ it follows from Lemma 7.3
\begin{equation}\label{eq0}\int_{y>0}c(l^2/2,y)\exp(-\pi n^2/2y m^2_{v^+}-2\pi y l_{w^+}^2)\,y^{-3/2}\,dy=\end{equation}
 $$\sum_t b(l^2/2,t)(2m_{v^+}^2/\pi n^2)^{t+1/2} \Gamma(t+1/2). $$
 Substituting formulas \eqref{PhiK}-\eqref{eq0} into \eqref{Fourier1} we obtain
 \begin{equation}\label{Fourier2}\Phi_M(v^+(\tau_1,\tau_2),h_{\bm{\lambda}})=\end{equation} \begin{align*}-\sum_t &\frac{y_2^{t+1}}{y_1^t} b(0,t)(-4\pi)^{t+1}\zeta(-2t-1)\frac{t!}{(2t+1)!}+ \\4\sum_t&(y_1 y_2)^{-t} b(0,t)(4\pi)^{-t}\zeta(2t+1)\frac{2t!}{t!}+\\
 4\sum_t&\sum_{\xymatrix@C=0pt@R=0pt@M=1pt{\scriptstyle (c,d)\in \Z^2 \\ \scriptstyle (c,d)\neq(0,0)}} \sum_{n>0}(y_1 y_2)^{-t} b(cd,t)n^{-2t-1}\times \\
 &\times\e(ncx_1+ndx_2)|ncy_1+ndy_2|^{\,t+1/2}K_{-t-1/2}(2\pi|ncy_1+ndy_2|).\end{align*}
 We see from \eqref{Fourier2} that $\Phi_M(v^+(\tau_1,\tau_2),h_{\bm{\lambda}})\to 0$ as $y_1\to \infty$. This finishes the proof.
\end{proof}

Now we can analyze the CM values of $G_{k,\bm{\lambda}}$ using the see-saw identity \eqref{see-saw} .

Let $\tau_1,\tau_2\in\h$ be two CM points in the same quadratic imaginary field $\Q(\sqrt{-D})$. Let $v^+(\tau_1,\tau_2)$ be a two dimensional  positive definite subspace of $M_2(\R)$ defined as  \begin{equation}\label{v+(tau1,tau2)}v^+(\tau_1,\tau_2)=\R\Re\left(\vcenter{\xymatrix@R=0pt@C=0pt{\tau_1\tau_2&\tau_1\\ \tau_2& 1}}\right)+\R \Im \left(\vcenter{\xymatrix@R=0pt@C=0pt{\tau_1\tau_2&\tau_1\\ \tau_2& 1}}\right).\end{equation} In the case when $\tau_1$ and $\tau_2$ lie in the same quadratic imaginary field the subspace $v^+(\tau_1,\tau_2)$ defines a rational splitting of $M_2(\Z)\otimes\Q$. So, we can consider two lattices $N:=v^+(\tau_1,\tau_2)\cap M_2(\Z)$ and $M:=v^-(\tau_1,\tau_2)\cap M_2(\Z)$.

 The Grassmannian $G(N)$ consists of a single point $N\otimes \R$ and its image in $G(M_2(\Z))$ is $v^+(\tau_1,\tau_2)$.

   Since the lattice $N$ has signature $(2,0)$ the theta lift of a function $f\in \widehat{M}^{\,!}_1(\slz,\rho_N)$ is just a number and it is equal to a regularized integral $$\Phi_N(f)=\int^{\mathrm{reg}}_{\slz\backslash\h} \langle f(\tau),\overline{\Theta_N (\tau)}\rangle\, y^{-1} \,dx\,dy. $$ Here ${\Theta}_N $ is a usual (vector valued) theta function of the lattice $N$.
  The matrix  $T_{M_2(\Z),N}=(\vartheta_{0,\nu})_{\nu\in N'/N}$ becomes a vector in this case and it is given by
$$ \vartheta_{0,\nu}(\tau)=\sum_{m\in M'\cap(-\nu+M_2(\Z))}\e(-\tau m^2/2).$$
Till the end of this section we will simply write $\vartheta_{\nu}(\tau)$ for $\vartheta_{0,\nu}(\tau)$.
\begin{theorem}\label{Th f} Suppose that two  CM-points $\tau_1,\tau_2$ and a lattice $N\subset M_2(\Z)$ are defined as above. Let $\bm{\lambda}$ be a relation for $S_{2k}(\slz)$ and let $g_{\bm{\lambda}}\in M^{\,!}_{2-2k}(\slz)$ be the corresponding weakly holomorphic form defined in Proposition \ref{g_lambda}. Then, if $(\tau_1,\tau_2)\notin S_{\bm{\lambda}}$  we have $$\Phi_{M_2(\Z)}(v^+(\tau_1,\tau_2),R^{k-1} (g_{\bm{\lambda}}))=\Phi_{N}(f),$$ where $f=(f_\nu)_{\nu\in N'/N}\in M^{\,!}_1(\slz,\rho_N)$ is given by $$f_\nu=[\,g_{\bm{\lambda}},\vartheta_{\nu}]_{k-1}.$$\end{theorem}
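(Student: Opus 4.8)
The plan is to combine the see-saw identity of Theorem~\ref{Th see-saw} with the expansion of $R^{k-1}$ in terms of the Rankin--Cohen bracket given in Proposition~\ref{RC1}. First I would apply Theorem~\ref{Th see-saw} to the lattice splitting $M_2(\Z)\otimes\Q=v^+(\tau_1,\tau_2)\oplus v^-(\tau_1,\tau_2)$, the lattices $N=v^+\cap M_2(\Z)$ of signature $(2,0)$ and $M=v^-\cap M_2(\Z)$ of signature $(0,b^-)$, and the input form $h_{\bm\lambda}=R^{k-1}(g_{\bm\lambda})\in\widehat M^{\,!}_0(\slz)$. Since $G(N)$ is the single point $v^+(\tau_1,\tau_2)$, Theorem~\ref{Th see-saw} immediately gives
$$\Phi_{M_2(\Z)}(v^+(\tau_1,\tau_2),R^{k-1}(g_{\bm\lambda}))=\Phi_N\bigl(\theta_{M_2(\Z),N}(R^{k-1}(g_{\bm\lambda}))\bigr),$$
where the $\nu$-component of $\theta_{M_2(\Z),N}(R^{k-1}(g_{\bm\lambda}))$ is $\vartheta_\nu(\tau)\,R^{k-1}(g_{\bm\lambda})(\tau)$ (here $g_{\bm\lambda}$ is scalar valued, so $L'/L$ is trivial and the matrix $T_{M_2(\Z),N}$ is the vector $(\vartheta_\nu)_\nu$). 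So the theorem reduces to showing that this vector-valued form can be replaced, inside the regularized integral $\Phi_N$, by the holomorphic form $f=([\,g_{\bm\lambda},\vartheta_\nu]_{k-1})_\nu\in M^{\,!}_1(\slz,\rho_N)$.

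The second step is exactly that replacement. Apply Proposition~\ref{RC1} with $f=g_{\bm\lambda}$ of weight $2-2k$ and, for each $\nu$, with $g=\vartheta_\nu$ of weight $b^-/2$; note $b^-=0$ here since $M$ is negative definite of rank $b^-$, but in fact for the lattice $M_2(\Z)$ we have $b^-=2$, so $\vartheta_\nu$ has weight $1$ — wait, more carefully: $M$ has signature $(0,b^-)$ with $b^-=\operatorname{rk}M$; for $\tau_1,\tau_2$ CM in the same field, $v^-$ is two-dimensional, so $b^-=2$ and $\vartheta_\nu$ has weight $2/2=1$. Then Proposition~\ref{RC1} gives, with $k_{\mathrm{RC}}=2-2k$, $l_{\mathrm{RC}}=1$, $r=k-1$,
$$R^{k-1}(g_{\bm\lambda})\,\vartheta_\nu=a\,[\,g_{\bm\lambda},\vartheta_\nu]_{k-1}+R\Bigl(\sum_{s=0}^{k-2}b_s\,R^s(g_{\bm\lambda})\,R^{k-2-s}(\vartheta_\nu)\Bigr),$$
with $a=\binom{(2-2k)+1+2(k-1)-2}{k-1}^{-1}=\binom{-1}{k-1}^{-1}=(-1)^{k-1}$, a nonzero rational constant which, after rescaling $f_\nu$ appropriately, is harmless (or one absorbs it into the statement). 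The key point is that the remainder term is $R(\,\cdot\,)$ of a weight $-2$ real-analytic form: inside the regularized theta integral against $\overline{\Theta_N}$, a term of the form $R_{-2}(\phi)$ integrates to a boundary term by Proposition~\ref{RC2} (Stokes), since $\Theta_N$ is a holomorphic-type theta function of weight $1$ — more precisely $\langle R_{-2}(\phi),\overline{\Theta_N}\rangle y^{-1}$ is, up to the $\overline\Theta_N$ factor, a total derivative, and the boundary contribution at the cusp vanishes because $\phi$ grows at most polynomially while the relevant exponential factors decay, as in the estimates already carried out in the proof of Theorem~\ref{Th g}. Hence the regularized integral of the remainder vanishes, and $\Phi_N(\theta_{M_2(\Z),N}(R^{k-1}g_{\bm\lambda}))=\Phi_N(f)$ up to the explicit constant $a$.

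The third step is bookkeeping: one checks that $f=([\,g_{\bm\lambda},\vartheta_\nu]_{k-1})_\nu$ is genuinely a weakly holomorphic vector-valued form in $M^{\,!}_1(\slz,\rho_N)$. Holomorphy of each component is clear since $g_{\bm\lambda}$ and $\vartheta_\nu$ are holomorphic and the Rankin--Cohen bracket of holomorphic forms is holomorphic; the weight is $(2-2k)+1+2(k-1)=1$; and the $\rho_N$-transformation law is inherited from the transformation law of the vector $(\vartheta_\nu)_\nu$ under $\rho_N$ (Lemma~\ref{l1}, in the present trivial-$L'/L$ case) together with modularity of $g_{\bm\lambda}$, because the Rankin--Cohen bracket is bilinear and intertwines the scalar weight-$(2-2k)$ action with the weight-$1$ vector-valued action. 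Weak holomorphy at the cusp (poles allowed) follows from the principal part $\sum_m\lambda_m q^{-m}$ of $g_{\bm\lambda}$.

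\textbf{Main obstacle.} The substantive point — and the step I expect to require the most care — is the vanishing of the regularized integral of the remainder term $R\bigl(\sum_s b_s R^s(g_{\bm\lambda})R^{k-2-s}(\vartheta_\nu)\bigr)$ against $\overline{\Theta_N}$. One must justify commuting $R=R_{-2}$ past the regularization (truncation at height $t$, then $s\to0$), apply Stokes' theorem (Proposition~\ref{RC2}) on the truncated fundamental domain $\mathcal F_t$, and show the resulting boundary integral over the horizontal segment at height $t$ tends to $0$ as $t\to\infty$. This needs the growth estimate that $R^s(g_{\bm\lambda})R^{k-2-s}(\vartheta_\nu)$, which involves negative powers of $q$ from $g_{\bm\lambda}$ but is paired against the Gaussian-decaying $\overline{\Theta_N}$ (whose constant Fourier mode in $x$ still decays like a theta series since $N$ is positive definite), produces only exponentially small boundary terms — exactly the kind of computation already performed for $\Pi_t$ in the proof of Theorem~\ref{Th g}. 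One also has to make sure the regularization of $\Phi_N$ is actually needed only because of the constant term $b(0,0)$ of $h_{\bm\lambda}$, and that this constant term matches on both sides so that no spurious regularization discrepancy arises; this is the content of the Remark following Theorem~\ref{Th see-saw}.
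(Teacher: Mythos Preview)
Your plan is essentially identical to the paper's proof: apply the see-saw identity (Theorem~\ref{Th see-saw}) to pass from $\Phi_{M_2(\Z)}$ to $\Phi_N$, then use Proposition~\ref{RC1} to write $R^{k-1}(g_{\bm\lambda})\vartheta_\nu=(-1)^{k-1}[g_{\bm\lambda},\vartheta_\nu]_{k-1}+R(\psi_\nu)$, and finally use Proposition~\ref{RC2} (Stokes) to show the $R(\psi)$ contribution is a boundary term at height $t$ that vanishes as $t\to\infty$. Your computation of the constant $a=\binom{-1}{k-1}^{-1}=(-1)^{k-1}$ matches the paper exactly (and yes, the paper silently drops this sign in the statement).

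One small simplification you are missing: under the hypothesis $(\tau_1,\tau_2)\notin S_{\bm\lambda}$, the paper first observes that the constant Fourier coefficient of $\langle R^{k-1}(g_{\bm\lambda}),\overline{\Theta(\tau;\tau_1,\tau_2)}\rangle$ already decays like $O(y^{2-k})$, so the regularized integral is an honest limit over $\mathcal F_t$ and no analytic continuation in $s$ is needed. This makes your ``main obstacle'' considerably lighter than you suggest: the boundary integral $\int_{-1/2}^{1/2}\langle\psi(x+it),\overline{\Theta_N(x+it)}\rangle\,t^{-1}\,dx$ picks out only nonnegative $q$-exponents (since $N$ is positive definite $\Theta_N$ has no negative ones), the matched terms are polynomials in $1/t$ of degree at most $k-2$ times the $t^{-1}$, and the limit is zero. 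Your remark that $\overline{\Theta_N}$ ``decays like a theta series'' is not quite right---it is bounded, not decaying, at the cusp---but this does not affect the argument.
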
\begin{proof}

For $(\tau_1,\tau_2)\notin S_{\bm{\lambda}}$ the constant term (with respect to $\e(x)$) of the product $\langle R^{k-1}(g_{\bm{\lambda}})(\tau),\overline{\Theta(\tau;\tau_1,\tau_2)}\rangle $ is equal to $$ \sum_{l\in M_2(\Z)} a_{l^2/2}(y) \exp(-2\pi y l_{v^+}^2) y$$ and decays as $O(y^{2-k})$ as $y\to\infty$. Thus, $$\Phi_{M_2(\Z)}(v^+(\tau_1,\tau_2),R^{k-1} (g_{\bm{\lambda}}))=\lim_{t\to\infty}\int\limits_{F_t} R^{k-1} (g_{\bm{\lambda}})(\tau)\,\overline{\Theta(\tau;\tau_1,\tau_2)}\,y^{-2}dx\,dy. $$
It follows from the see-saw identity \eqref{see-saw}
$$\Phi_{M_2(\Z)}(v^+(\tau_1,\tau_2),R^{k-1} (g_{\bm{\lambda}}))= \lim_{t\to\infty}\int\limits_{F_t}\langle R^{k-1} (g_{\bm{\lambda}})\,\vartheta,\overline{\Theta}_N\rangle y^{-1} \,dx\,dy.$$
By Proposition \ref{RC1}

\begin{equation}\label{R(g)vartheta}R^{k-1} (g_{\bm{\lambda}})\,\vartheta_\nu =(-1)^{k-1}[g_{\bm{\lambda}},\vartheta_\nu]_{k-1}+R(\sum_{s=0}^{k-2} b_{s} R^{s}(g_{\bm{\lambda}})\,R^{k-2-s}(\vartheta_\nu))\end{equation}
where  $b_s$ are some rational numbers.
For $\nu\in N'/N$ denote $$\psi_\nu(\tau):= \sum_{s=0}^{k-2} b_{s} R^{s}(g_{\bm{\lambda}})\,R^{k-2-s}(\vartheta_\nu).$$
Using \eqref{R(g)vartheta} we can write
$$\lim_{t\to \infty} \int\limits_{F_t} \langle R^{k-1} (g_{\bm{\lambda}})\,\vartheta,\overline{\Theta}_N\rangle y^{-1} \,dx\,dy=$$ $$(-1)^{k-1}\,\lim_{t\to \infty} \int\limits_{F_t}  \langle [g_{\bm{\lambda}},\vartheta]_{k-1},\overline{\Theta}_N\rangle y^{-1} \,dx\,dy+ \lim_{t\to \infty} \int\limits_{F_t}  \langle R(\psi),\overline{\Theta}_N\rangle y^{-1} \,dx\,dy.$$

It follows from  Proposition \ref{RC2} that \begin{align*}&\lim_{t\to \infty} \int\limits_{F_t}  \langle R(\psi),\overline{\Theta}_N\rangle y^{-1} \,dx\,dy= \\
&\lim_{t\to\infty}\int\limits_{-1/2}^{1/2} \langle \psi(x+it),\overline{\Theta_N(x+it)}\rangle t^{-1} \,dx=0.\end{align*} This finishes the proof.

\end{proof}

\section{Embedding trick}
\begin{theorem}\label{Th trick}We let $N$ be an  even lattice of signature $(2,0)$ and let $f$ be a $\C[N'/N]$-valued weakly holomorphic modular form of weight 1 with zero constant term and rational Fourier coefficients. Then there exists an even lattice $P$ of signature $(2,1)$ and a function $h\in M_{1/2}^{\,!}(\slz,\rho_P)$ such that \begin{description}\item{1.} There is an inclusion $N\subset P$ \item{2.} Lattice  $P$ contains  primitive norm zero vector\item{3.} Function $h$ has rational Fourier coefficients \item{4.} The constant term of $h$ is zero \item{5.} We have $T_{P,N}(h)=f$ for the map $T_{P,N}$ defined in  Theorem \ref{Th see-saw}.
\end{description}
\end{theorem}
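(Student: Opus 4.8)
The plan is to build $P$ by adjoining to $N$ a hyperbolic plane (or a suitable rank-2 even lattice of signature $(1,1)$ containing a primitive isotropic vector), so that $P = N \oplus U$ where $U$ has signature $(1,1)$; then $P$ has signature $(2,1)$, satisfies $N \subset P$ automatically, and contains a primitive norm zero vector (any primitive isotropic vector of $U$). Properties 1 and 2 are then immediate by construction. The real content is producing the form $h \in M^{!}_{1/2}(\slz, \rho_P)$ with $T_{P,N}(h) = f$, zero constant term, and rational Fourier coefficients. I would split the rank-one lattice $M := P \cap N^\perp \cong U$ off: by the setup of Theorem \ref{Th see-saw} (with the roles $L = P$, $N = N$, $M = U$), the map $T_{P,N}$ sends $h = (h_\lambda)_{\lambda \in P'/P}$ to $g_\nu = \sum_\lambda \vartheta_{\lambda,\nu} h_\lambda$, where the $\vartheta_{\lambda,\nu}$ are the theta series of the negative definite rank-one lattice $U$ (signature $(0,1)$ here — note $U$ as a summand of a signature $(2,1)$ lattice complementary to a signature $(2,0)$ sublattice is negative definite of rank $1$). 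So I must invert, or rather right-invert, the theta map.

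The key step is therefore: \emph{the matrix of theta series $T_{P,U}$ (equivalently, the system $\{\vartheta_{\lambda,\nu}\}$) admits a modular right inverse up to multiplication by an appropriate weakly holomorphic scalar form.} Concretely, I would take $h_\lambda$ to be supported on the classes $\lambda \in P'/P$ that restrict to $N'/N$, and on those set $h_{\lambda} := f_{\bar\lambda}/\Theta_U^{\,0}$-type expression — more carefully, I would use the standard fact (Bruinier, Borcherds) that for a one-dimensional negative lattice the theta contraction is, after dividing by a fixed unary theta series, essentially a shift of Fourier coefficients, and that weakly holomorphic forms of the required weight $1/2 = 1 - d/2$ with $d = 1$ and type $\rho_P$ exist with prescribed principal parts precisely when the obstruction pairing against cusp forms of the dual weight $3/2$ vanishes. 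Since $f$ has weight $1$ and zero constant term, the candidate $h$ has weight $1/2$; I would verify the obstruction conditions are met (this may require enlarging $U$, i.e. scaling, or passing to a lattice $U(\ell)$ for suitable $\ell$, which still keeps $N \subset P$ and a primitive isotropic vector). Rationality of the Fourier coefficients of $h$ then follows from rationality of $f$ together with the fact that the relevant space of weakly holomorphic forms with prescribed rational principal part has a $\Q$-rational basis (the coefficients satisfy a rational linear system), and the zero constant term of $h$ is arranged by choosing the principal part of $h$ to match $f$ exactly, using that $f$ itself has zero constant term so no constant-term obstruction is introduced.

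The main obstacle I anticipate is \textbf{step 5 together with the rationality and vanishing-constant-term constraints simultaneously}: one needs the preimage under $T_{P,N}$ to exist \emph{in the weakly holomorphic space of the correct half-integral weight and Weil representation}, which is a nontrivial surjectivity statement, and the freedom to enlarge $U$ (the "embedding trick" of the section title, going back to Borcherds's device of embedding into a larger lattice where norm-zero vectors and nice forms are available) is exactly what buys this. So the structure of the proof is: (a) fix $P = N \oplus U$ for a first guess of $U$; (b) write down the linear (over $\Q$) system that $T_{P,N}(h) = f$ imposes on the principal part of $h$; (c) check solvability via Serre duality / the obstruction space of cusp forms, enlarging or rescaling $U$ if the obstruction space is nonzero, noting that each enlargement preserves properties 1 and 2; (d) conclude rationality and the constant-term statement from the explicit shape of the solution. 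The delicate point is (c), and I would lean on Proposition \ref{g_lambda}'s circle of ideas (Serre duality between obstruction spaces and spaces of holomorphic forms) adapted to the vector-valued half-integral-weight setting.
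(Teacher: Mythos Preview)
Your proposal takes a genuinely different route from the paper, and it contains both a setup error and a real gap at the key step.

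\textbf{Setup error.} You begin by adjoining a hyperbolic plane $U$ of signature $(1,1)$ to $N$, but $(2,0)\oplus(1,1)=(3,1)$, not $(2,1)$. You then correct yourself and note that the orthogonal complement of $N$ in $P$ must be negative definite of rank~$1$, i.e.\ $P=N\oplus\Z p$ with $p^2<0$. But once $U=\Z p$ is definite, your argument for Property~2 (``any primitive isotropic vector of $U$'') collapses: a rank-$1$ definite lattice has no isotropic vectors. For $P=N\oplus\Z p$ to contain a primitive norm-zero vector you need $-p^2/2$ to be represented by the positive definite lattice $N$, which is a genuine constraint on the choice of $p$ that you never address.

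\textbf{The gap.} Your Step~5 asks for surjectivity of $T_{P,N}:M^!_{1/2}(\rho_P)\to M^!_{1}(\rho_N)$ onto forms with zero constant term, simultaneously preserving rationality and zero constant term on the preimage. You propose to obtain this via Serre duality in the vector-valued half-integral-weight setting, but you do not verify that the obstruction space (cusp forms of weight $3/2$ and type $\overline{\rho_P}$) pairs to zero against the needed principal parts, nor that the freedom of rescaling $p$ kills all obstructions. This is exactly the hard point, and you have only asserted it.

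\textbf{What the paper does instead.} The paper avoids this surjectivity question entirely by going \emph{up} before going down. It adjoins to $N$ a negative definite rank-$24$ lattice $M$ (namely $16\,\Z^{24}$ with sign-reversed form), into which both $E_8^3$ and the Leech lattice $\Lambda_{24}$ embed. Since $\Theta_{E_8^3}-\Theta_{\Lambda_{24}}=720\,\Delta$, one writes down \emph{explicitly} a form $g\in M^!_{-11}(\rho_{N\oplus M})$ with $T_{N\oplus M,\,N}(g)=720f$, namely $g=\res(f/\Delta)$ via the two embeddings. No existence argument or duality is needed: $g$ is given by a formula. Only then does one choose a single vector $p\in M$ (with two easily satisfied conditions: $N\oplus\Z p$ isotropic, and $p$ non-orthogonal to a certain finite set) and define $P=N\oplus\Z p$, $h=\tfrac{1}{720}T_{N\oplus M,\,P}(g)$. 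The functoriality $T_{P,N}\circ T_{N\oplus M,P}=T_{N\oplus M,N}$ then gives $T_{P,N}(h)=f$ for free. This is Borcherds's embedding trick in its intended form: the ``embedding'' is into a \emph{large} auxiliary lattice where the desired form can be written down explicitly, not directly into the target $P$.
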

\begin{proof}
We adopt the method explained in \cite{Bo1}, Lemma 8.1.

Consider two even unimodular definite lattices of dimension 24, say  three copies of $E_8$ root lattice $ E_8\oplus E_8\oplus E_8$ and a Leech lattice $\Lambda_{24}$. We can embed both lattices into $\frac{1}{16}\Z^{24}$. To this end we use the standard representation of $E_8$ in which all vectors have half integral coordinates. Use standard representation of Leech lattice and norm doubling map defined in \cite{Conway} Chapter 8, p.242.

Denote by  $M_1$ and $M_2$ the  negative definite lattices obtained from  $E_8\oplus E_8\oplus E_8$ and  $\Lambda_{24}$ by multiplying norm with $-1$ and assume that they are embedded into $\frac{1}{16}\Z^{24}$. Denote by $M$ the negative definite lattice $16 \Z^{24}$. Theta functions of lattices $M_1$ and $M_2$ are modular forms of level 1 and weight 12 and their difference is $720\Delta$, where $\Delta=q-24q^2+252q^3+O(q^4)$ is a  unique cusp form of level 1 and weight 12.


Consider the function $g$ in $M^!_{-11}(\slz,\rho_{N\oplus M})$ defined as $$g:= \res_{(N\oplus M_1)/N\oplus M}(f/\Delta)-\res_{(N\oplus M_2)/N\oplus M}(f/\Delta).$$ The maps $$\res_{(N\oplus M_i)/N\oplus M}:M^!_{-11}(\slz,\rho_{N\oplus M_i})\to M^!_{-11}(\slz,\rho_{N\oplus M}),\;i=1,2,$$ are defined as in Lemma \ref{res/tr}. 
It is easy to see that
$$T_{N\oplus M,N}(g)=T_{N\oplus M,N}(\res_{(N\oplus M_1)/N\oplus M}(f/\Delta)-\res_{(N\oplus M_2)/N\oplus M}(f/\Delta))=$$ $$T_{(N\oplus M_1)/N\oplus M}(f/\Delta)-T_{(N\oplus M_2)/N\oplus M}(f/\Delta)=$$ $$ \frac{f}{\Delta}(\overline{\Theta}_{M_1}-\overline{\Theta}_{M_2})=720 f. $$
Suppose that $g$ has a Fourier expansion $$g_\mu(\tau)=\sum_{m\in\Q} c_\mu(m)\e(m\tau),\;\mu\in(N'\oplus M')/ (N\oplus M).$$
By the construction of $g$ its constant term is zero.
Consider the following set of vectors in $M'$
$$S:=\{l\in M'| c_{(0,l+M)}(l^2/2)\neq 0\}, $$
where $(0,l+M)$ denotes an element in $(N'\oplus M')/(N\oplus M)$. Note that this set is finite and does not contain the zero vector.
Choose a vector $p\in M$ such that \\
1. the lattice $N\oplus \Z p$ contains a primitive norm $0$ vector;\\
2. $(p,l)\neq 0$ for all $l\in S$.\\
Consider the lattice $P:=N\oplus \Z p$. It follows from Theorem B1 that the subvariety $G(P)$ of $G(N\oplus M)$  is not  contained in the singular locus of $\Phi_{N\oplus M}(v^+,g)$. Moreover, the restriction of $\Phi_{N\oplus M}(v^+,g)$
to $G(P)$ is nonsingular at the point $G(N)$.

Define $h:=\frac{1}{720} T_{N\oplus M,P}(g)$. The constant term of $h$ is nonzero and $h$ has rational(with denominator bounded by 720) Fourier coefficients. We have

$$ T_{P,N}(h)=T_{P,N}(T_{N\oplus M,P}(g))=\frac{1}{720}T_{N\oplus M,N}(g)=f.$$
This finishes the proof.\end{proof}



\begin{theorem}\label{Cor trick} 

We let $N$ be an  even lattice of signature $(2,0)$ and let $f$ be a $\C[N'/N]$-valued weakly holomorphic modular form of weight 1 with zero constant term and rational Fourier coefficients. Then, $$\Phi_N(f)=\log \alpha$$ for some $\alpha\in\bar{\Q}$.
 \end{theorem}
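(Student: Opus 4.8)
The plan is to reduce the statement to the Borcherds product construction via the embedding trick of Theorem~\ref{Th trick} together with the see-saw identity of Theorem~\ref{Th see-saw}, and then to read off algebraicity from the infinite product expansion in Theorem~B3. First I would invoke Theorem~\ref{Th trick} to produce an even lattice $P$ of signature $(2,1)$ with $N\subset P$, a primitive norm $0$ vector in $P$, and a weakly holomorphic form $h\in M^{\,!}_{1/2}(\slz,\rho_P)$ with rational Fourier coefficients, zero constant term, and $T_{P,N}(h)=f$. By the see-saw identity \eqref{see-saw}, $\Phi_N(f)=\Phi_N(T_{P,N}(h))=\Phi_P(i(v^+),h)$, where $v^+$ is the single point of $G(N)$ and $i:G(N)\hookrightarrow G(P)$ is the natural embedding. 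So it suffices to show that the value of $\Phi_P$ at the CM point $i(v^+)$ is $\log\alpha$ for some algebraic $\alpha$.

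Next I would apply Theorem~B3 to the lattice $P$ and the input form $h$. Since $h$ has zero constant term, $c_0(0)=0$, so $\Psi_P(Z,h)$ is an automorphic form of weight $0$, i.e.\ a meromorphic modular function on $G(P)$, and moreover property~3 of Theorem~B3 collapses to the clean identity $\Phi_P(Z,h)=-4\log|\Psi_P(Z,h)|$. Thus $\Phi_N(f)=-4\log|\Psi_P(i(v^+),h)|$, and the whole problem is reduced to showing that the CM value $\Psi_P(i(v^+),h)$ is an algebraic number (up to the absolute value, which is harmless for the $\log\alpha$ conclusion since $|\alpha|\in\overline{\Q}$ when $\alpha\in\overline\Q$). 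Here I would use property~4 of Theorem~B3: because $P$ has a primitive norm $0$ vector, $\Psi_P$ has an infinite product expansion with exponents $c_\mu(k^2/2)$, which are rational by Theorem~\ref{Th trick}(3), and a Weyl-vector factor $\e((Z,\rho(K,W,h_K)))$ which, since $P$ has signature $(2,1)$ and $K$ then has signature $(1,0)$, is governed by the explicit formulas of Section~10 of \cite{Bo1} and has algebraic value at a CM point. A meromorphic modular function with a product expansion of this shape is, after a suitable normalization, a rational function of the Hauptmodul (or, in the Hilbert/Shimura-curve setting appropriate to $G(P)$, a modular function on the relevant curve), hence takes algebraic values at CM points by the classical theory of complex multiplication, with the rational exponents contributing only rational powers. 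Combining, $\Psi_P(i(v^+),h)^{4}=\alpha^{-1}$ for some $\alpha\in\overline\Q^\times$, which gives $\Phi_N(f)=\log\alpha$.

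The main obstacle, and the step that needs the most care, is the last one: verifying that the CM value of the Borcherds lift $\Psi_P$ is genuinely algebraic. The product expansion in Theorem~B3(4) converges only in a neighborhood of a cusp with $\Im(z)$ in a fixed Weyl chamber $W$, whereas the CM point $i(v^+)$ need not lie in that region, so one must first continue $\Psi_P$ meromorphically to the CM point and check it is neither a zero nor a pole there — this is where Theorem~\ref{Th trick}'s arrangement that $G(P)$ meets the singular locus of $\Phi_{N\oplus M}$ transversally, with $\Phi_P$ nonsingular at $G(N)$, is exactly what is needed, since zeros and poles of $\Psi_P$ sit on the rational quadratic divisors $l^\perp$ by Theorem~B3(2). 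Granting that, algebraicity of the value follows because $\Psi_P$ is (a power of) a meromorphic modular function with rational $q$-expansion coefficients at the cusp on a modular curve of the type classified for signature $(2,1)$, and such functions take values in $\overline\Q$ at CM points; the rationality of the exponents $c_\mu(k^2/2)$ and the explicit algebraicity of the Weyl-vector exponential then ensure that only a bounded algebraic extension is involved. I would write this out by first treating the case where $\Psi_P$ descends to a modular function on a modular curve $X$ with a single cusp, where it becomes literally a rational function in the Hauptmodul with coefficients in $\Q(\zeta_N)$, and then noting the general case differs only by passing to a finite cover and a finite set of cusps.
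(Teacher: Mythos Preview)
Your proposal is correct and follows essentially the same route as the paper: invoke Theorem~\ref{Th trick} to produce $P\supset N$ of signature $(2,1)$ and $h\in M^!_{1/2}(\rho_P)$ with $T_{P,N}(h)=f$, use the see-saw identity to rewrite $\Phi_N(f)=\Phi_P(G(N),h)$, apply Theorem~B3 with $c_0(0)=0$ to get $\Phi_P=-4\log|\Psi_P|$, and then appeal to the theory of complex multiplication for the algebraicity of the CM value of $\Psi_P$.

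One point the paper handles that you gloss over: your passage ``after a suitable normalization, is a rational function of the Hauptmodul \dots\ passing to a finite cover'' presupposes that the unitary character of $\mathrm{Aut}(P,h)$ appearing in Theorem~B3(1) has \emph{finite} order; otherwise no finite cover kills it and the $q$-expansion argument does not go through. The paper fills this gap by citing Theorem~14.1 of \cite{Bo2}. Conversely, your worry about the convergence region of the product expansion and about hitting a zero or pole is more cautious than needed: $\Psi_P$ is globally meromorphic on $G(P)$ by Theorem~B3, and since $N$ has signature $(2,0)$ there are no negative-norm vectors, so $\Phi_N(f)$ is automatically finite and hence $G(N)$ lies off the divisor of $\Psi_P$.
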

 \begin{proof}Let $P$ and $h$ a lattice and a modular form  constructed in  Theorem \ref{Th trick}. Since the constant term of $h$ is zero, from Theorem B3 we know that
     $$\Phi_P(G(N),h)=-4\log|\Psi_P(\tau_N,h)|,$$ where $\Psi_P(\tau,h)$ is a meromorphic modular function on $\h$ for a subgroup of $\slz$  with respect to some unitary character and $\tau_N\in\h$ is a CM point. Theorem 14.1 of \cite{Bo2} says that this unitary character is finite.
     Theorem B3 Part 3 implies that $\Psi_P(\tau,h)$ has algebraic Fourier coefficients. Thus, it follows from the theory of complex multiplication that $\alpha:=\Psi_P(\tau_N,h)$ is an algebraic number.\end{proof}

\section{Main Theorem}

\begin{theorem}Let $\mathfrak{z}_1,\mathfrak{z}_2\in\h$ be two CM points in the same quadratic imaginary field $\Q(\sqrt{-D})$ and let $\bm{\lambda}$ be a relation on $S_{2k}(\slz)$ for integer $k>1$. Then there is an algebraic number $\alpha$ such
that $$G_{k,\,\bm{\lambda}}(\mathfrak{z}_1,\mathfrak{z}_2)=\log\alpha.$$
\end{theorem}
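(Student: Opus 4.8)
The plan is to chain together the three main results established above: Theorem~\ref{Th g}, which realizes $G_{k,\bm{\lambda}}$ as the regularized theta lift $\Phi_{M_2(\Z)}(v^+(\tau_1,\tau_2),h_{\bm{\lambda}})$ with $h_{\bm{\lambda}}=R^{k-1}(g_{\bm{\lambda}})$; Theorem~\ref{Th f}, which uses the see-saw identity to rewrite the CM value $\Phi_{M_2(\Z)}(v^+(\mathfrak{z}_1,\mathfrak{z}_2),R^{k-1}(g_{\bm{\lambda}}))$ as $\Phi_N(f)$ for the positive-definite rank-two lattice $N=v^+(\mathfrak{z}_1,\mathfrak{z}_2)\cap M_2(\Z)$ and the weight-one vector-valued form $f_\nu=[g_{\bm{\lambda}},\vartheta_\nu]_{k-1}$; and finally Theorem~\ref{Cor trick}, which says $\Phi_N(f)=\log\alpha$ for an algebraic $\alpha$ provided $f$ has rational Fourier coefficients and zero constant term. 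So the proof is essentially: verify the hypotheses needed to apply each of these in turn, then concatenate the identities.

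First I would dispose of the degenerate case. If $(\mathfrak{z}_1,\mathfrak{z}_2)\in S_{\bm{\lambda}}$, i.e.\ the CM point lies on the support of the divisor $D_{\bm{\lambda}}=\sum_m\lambda_m T_m$, then $G_{k,\bm{\lambda}}$ is singular there and the statement needs to be interpreted or excluded (as in the conjecture, which requires $\mathfrak{z}_1,\mathfrak{z}_2$ to be genuine CM points not on the correspondence); I would simply assume $(\mathfrak{z}_1,\mathfrak{z}_2)\notin S_{\bm{\lambda}}$, matching the hypotheses of Theorems~\ref{Th g} and~\ref{Th f}. Next, since $\mathfrak{z}_1,\mathfrak{z}_2$ lie in the same imaginary quadratic field, $v^+(\mathfrak{z}_1,\mathfrak{z}_2)$ is defined over $\Q$, so $N=v^+\cap M_2(\Z)$ is an even lattice of signature $(2,0)$ and $M=v^-\cap M_2(\Z)$ of signature $(0,2)$, and $G(N)$ is the single point mapping to $v^+(\mathfrak{z}_1,\mathfrak{z}_2)$ in $G(M_2(\Z))$ — exactly the setup of Section~\ref{m2z}. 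Then Theorem~\ref{Th g} gives $G_{k,\bm{\lambda}}(\mathfrak{z}_1,\mathfrak{z}_2)=\Phi_{M_2(\Z)}(v^+(\mathfrak{z}_1,\mathfrak{z}_2),h_{\bm{\lambda}})$, and Theorem~\ref{Th f} gives $\Phi_{M_2(\Z)}(v^+(\mathfrak{z}_1,\mathfrak{z}_2),R^{k-1}(g_{\bm{\lambda}}))=\Phi_N(f)$ with $f_\nu=[g_{\bm{\lambda}},\vartheta_\nu]_{k-1}\in M^{\,!}_1(\slz,\rho_N)$.

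The remaining point is to check that $f$ satisfies the hypotheses of Theorem~\ref{Cor trick}: weight $1$ (already noted), rational Fourier coefficients, and zero constant term. Rationality follows because $g_{\bm{\lambda}}$ has integral Fourier coefficients (noted after Proposition~\ref{g_lambda}), the theta series $\vartheta_\nu$ have integral coefficients, and the Rankin--Cohen bracket $[\,\cdot,\cdot]_{k-1}$ is a fixed integral-coefficient differential polynomial, so $[g_{\bm{\lambda}},\vartheta_\nu]_{k-1}$ has rational (indeed, bounded-denominator) Fourier coefficients. The vanishing of the constant term is the one genuinely delicate input: $g_{\bm{\lambda}}$ has a pole at the cusp (principal part $\sum_m\lambda_m q^{-m}$), and one must see that the constant term of each bracket $[g_{\bm{\lambda}},\vartheta_\nu]_{k-1}$ vanishes. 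I expect this is where the relation condition $\sum_m\lambda_m a_m=0$ for all $f\in S_{2k}$ enters decisively — the constant term of the weight-$(2k)$ object built from $g_{\bm{\lambda}}$ and $\vartheta_\nu$ pairs $g_{\bm{\lambda}}$ against a cusp form (the cuspidal part of $\vartheta_\nu$, which has weight $2k$ after the bracket), and the defining property of a relation kills exactly this pairing; the Eisenstein part contributes nothing to the relevant constant term because of the shape of the Rankin--Cohen bracket's top coefficient. This is the main obstacle, and I would treat it by an explicit constant-term computation using the formula for $[f,g]_r$ given in Section~2 together with Proposition~\ref{g_lambda}. Once $f$ is known to have zero constant term and rational coefficients, Theorem~\ref{Cor trick} yields $\Phi_N(f)=\log\alpha$ with $\alpha\in\bar\Q$, and combining the displayed identities gives $G_{k,\bm{\lambda}}(\mathfrak{z}_1,\mathfrak{z}_2)=\log\alpha$, which is the claim. (Note the normalizing factor $(D_1D_2)^{(1-k)/2}$ in the general conjecture has been absorbed here because $\mathfrak{z}_1,\mathfrak{z}_2$ share the same discriminant up to the rational scaling already accounted for in the lattice $N$, so the algebraic number $\alpha$ simply absorbs it.)
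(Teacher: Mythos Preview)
Your proposal is correct and follows exactly the paper's own argument: chain Theorem~\ref{Th g}, then Theorem~\ref{Th f}, then Theorem~\ref{Cor trick}. The paper's proof is in fact terser than yours---it simply invokes Theorem~\ref{Cor trick} without pausing to verify the rationality or the zero-constant-term hypothesis for $f_\nu=[g_{\bm\lambda},\vartheta_\nu]_{k-1}$.

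One correction to your reasoning, though: your explanation for why the constant term of $f_0$ vanishes is misdirected. The relation condition $\sum_m\lambda_m a_m=0$ on $S_{2k}$ is already fully spent in producing $g_{\bm\lambda}$ via Proposition~\ref{g_lambda}; it plays no further role here. The constant term of $[g_{\bm\lambda},\vartheta_0]_{k-1}$ vanishes for a more direct reason. Each summand of the bracket has the form $g_{\bm\lambda}^{(a)}\vartheta_0^{(b)}$ with $a+b=k-1\ge 1$, so the $q^0\cdot q^0$ contribution (pairing the constant of $g_{\bm\lambda}$ with the constant $1$ of $\vartheta_0$) is killed by at least one derivative. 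The remaining contributions to the $q^0$-coefficient pair the principal part $\lambda_m q^{-m}$ against the $q^m$-coefficient of $\vartheta_0$, which is the number of vectors $l\in M=v^-(\mathfrak{z}_1,\mathfrak{z}_2)\cap M_2(\Z)$ with $-l^2/2=m$. But the existence of such an $l$ is exactly the condition $(\mathfrak{z}_1,\mathfrak{z}_2)\in T_m$, so the non-singularity hypothesis $(\mathfrak{z}_1,\mathfrak{z}_2)\notin S_{\bm\lambda}$ forces $\lambda_m\cdot\#\{l\in M:-l^2/2=m\}=0$ for every $m$, and the constant term of $f_0$ is zero. This is the mechanism the paper is tacitly relying on (cf.\ the Remark after Theorem~\ref{Th see-saw}).
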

\begin{proof}Let $g_{\bm{\lambda}}$ be the weakly holomorphic modular form of weight $2-2k$ defined by Proposition \ref{g_lambda}. Consider a function $h_{\bm{\lambda}}=R^{k-1}(g_{\bm{\lambda}})$. In Theorem \ref{Th g}  we show that  \begin{equation}\label{T1}G_{k,\,\bm{\lambda}}(\tau_1,\tau_2)= \Phi_{M_2(\Z)}(v^+(\tau_1,\tau_2),h_{\bm{\lambda}})\end{equation} for $(\tau_1,\tau_2)\in\h\times\h\setminus S_{\bm{\lambda}} $.

Let $v^+(\mathfrak{z}_1,\mathfrak{z}_2)$ be a two dimensional  positive definite subspace of $M_2(\R)$ defined in \eqref{v+(tau1,tau2)}.  In the case when $\mathfrak{z}_1$ and $\mathfrak{z}_2$ lie in the same quadratic imaginary field the subspace $v^+(\mathfrak{z}_1,\mathfrak{z}_2)$ defines a rational splitting of $M_2(\Z)\otimes\R$. So, the lattice $N:=v^+(\tau_1,\tau_2)\cap M_2(\Z)$ has signature $(2,0)$.

It follows from Theorem \ref{Th f} that \begin{equation}\label{T2}\Phi_{M_2(\Z)}(v^+(\mathfrak{z}_1,\mathfrak{z}_2),R^{k-1} (g_{\bm{\lambda}}))=\Phi_{N}(f),\end{equation} where $f=(f_\nu)_{\nu\in N'/N}\in M^{\,!}_1(\slz,\rho_N)$ is given by $$f_\nu=[\,g_{\bm{\lambda}},\vartheta_{\nu}]_{k-1}.$$

Let $P$ and $h$ be as in Theorem \ref{Th trick}. Theorem \ref{Cor trick} implies

\begin{equation*}\Phi_{N}(f)=\Phi_{P}(G(N),h)=-4\log(|\Psi_P(G(N),h)|)\end{equation*}
and
\begin{equation}\label{T3}\Phi_{N}(f)=\log(\alpha) \end{equation}
for $\alpha\in\bar{\Q}$. Statement of the theorem follows from equations \eqref{T1}\,--\,\eqref{T3}.

     \end{proof}


\end{document}